\definecolor{shadecolor}{gray}{0.875}
\newtheorem{thm}{Theorem}[section]
\newtheorem{thmx}{Theorem}
\newtheorem{cor}[thm]{Corollary}
\newtheorem{prop}[thm]{Proposition}
\newtheorem{lem}[thm]{Lemma}
\newtheorem{quest}[thm]{Question}
\theoremstyle{definition}
\newtheorem{defn}[thm]{Definition}
\newtheorem{exmp}[thm]{Example}
\newtheorem{rmk}[thm]{Remark}
\newtheorem*{exer*}{Exercise}
\theoremstyle{remark}
\newcommand{\p}{\partial}
\newcommand{\R}{\mathbb R}
\newcommand{\e}{\varepsilon}
\providecommand{\abs}[1]{\left\lvert#1\right\rvert}
\newcommand{\vol}{\mathrm{vol}}
\newcommand{\Amp}{\mathrm{Amp}}
\DeclareMathOperator{\mult}{mult}
\DeclareMathOperator{\Eff}{\overline{Eff}}
\DeclareMathOperator{\Nef}{Nef}
\DeclareMathOperator{\Mov}{Mov}
\DeclareMathOperator{\Sing}{Sing}
\DeclareMathOperator{\codim}{codim}
\DeclareMathOperator{\HConc}{HConc}
\DeclareMathOperator{\sing}{sing}
\let\c@equation\c@thm
\numberwithin{equation}{section}
\title{Polar transform and local positivity for curves}
\author{Nicholas McCleerey and Jian Xiao}
\date{}
\begin{document}
\maketitle

\begin{abstract}
Using the duality of positive cones, we show that applying the polar transform from convex analysis to local positivity invariants for divisors gives interesting and new local positivity invariants for curves. These new invariants have nice properties similar to those for divisors. In particular, this enables us to give a characterization of the divisorial components of the non-K\"ahler locus of a big class.
\end{abstract}

\tableofcontents

\section{Introduction}
Let $X$ be a smooth projective variety of dimension $n$. Let $L$ be a nef line bundle on $X$. One of the most important invariants measuring the local positivity of $L$ at $x$ is the local Seshadri constant introduced by Demailly \cite{demaillySeshadri}. It is defined as follows:
\begin{equation}\label{eq divisor sesh}
  s_x (L) = \inf_{x\in C} \frac{L\cdot C}{\nu(C, x)},
\end{equation}
where $\nu(C, x) = \mult_x (C)$ is the multiplicity of $C$ at $x$, and the infimum is taken over all irreducible curves passing through $x$. The constant $s_x (L)$ measures the local ampleness of $L$ at $x$. It is well known that $s_x (L)$ can also be characterized as:
\begin{equation}\label{def sesh}
  s_x (L) = \sup\{t\geq 0 | \pi^{*}L - tE\ \textrm{is nef}\},
\end{equation}
where $\pi: Y \rightarrow X$ is the blow-up of $X$ at $x$ and $E = \pi^{-1}(x)$ is the exceptional divisor.
Futhermore, one can define the global Seshadri constant of $L$ by setting
\begin{equation*}
  s(L) = \inf_{x\in X} s_x (L).
\end{equation*}
Seshadri's well known criterion for ampleness can then be stated as: $L$ is ample if and only if $s(L)>0$.

Denote the cone of nef $(1,1)$ classes of $X$ by $\Nef^1 (X)$. It is not hard to see that the Seshadri constant can be defined for any class in $\Nef^1 (X)$, not only for classes given by divisors, by using definition \eqref{def sesh}. As a function defined on the cone $\Nef^1 (X)$, $s_x : \Nef^1 (X) \rightarrow \mathbb{R}$, satisfies:
\begin{itemize}
  \item it is upper semicontinuous and homogeneous of degree one;
  \item it is strictly positive in the interior of $\Nef^1 (X)$;
  \item it is 1-concave; that is, for any $L, M \in \Nef^1 (X)$ we have $s_x (L+M) \geq s_x (L) +s_x (M)$.
\end{itemize}
Moreover, by a direct intersection number calculation we have:
\begin{itemize}
  \item for any $L \in \Nef^1 (X)$, $s_x (L) \leq \vol(L)^{1/n}$.
\end{itemize}
The above properties also hold for the function $s : \Nef^1 (X) \rightarrow \mathbb{R}$.

Another important local positivity invariant for divisors is the local Nakayama constant introduced by Lehmann
(see \cite[Definition 5.1]{lehmann13comparing},
and also \cite{Nak04} for related objects).
It is defined for any pseudo-effective $(1,1)$ class $L$:
\begin{equation*}
  n_x (L) := \sup\{t\geq 0 | \pi^{*}L - tE\ \textrm{is pseudo-effective}\},
\end{equation*}
where $\pi: Y \rightarrow X$ again is the blow-up of $X$ at $x$ and $E = \pi^{-1}(x)$ is the exceptional divisor. Note that $n_x (L)$ can be strictly positive even if $L$ is not big, hence the local Nakayama constant is a more sensitive measure for positivity. Similar to the global Seshadri constant, we can associate to $L$ a global Nakayama constant by:
\begin{equation*}
  n(L) = \inf_{x\in X} n_x (L).
\end{equation*}

Denote the cone of pseudo-effective $(1,1)$ classes of $X$ by $\Eff^1 (X)$. As a function defined on the cone $\Eff^1 (X)$, $n_x : \Eff^1 (X) \rightarrow \mathbb{R}$, satisfies:
\begin{itemize}
  \item it is upper semicontinuous and homogeneous of degree one;
  \item it is strictly positive in the interior of $\Eff^1 (X)$;
  \item it is 1-concave.
\end{itemize}
Moreover, by the mass concentration method of \cite{Dem93} and upper semicontinuity of Lelong numbers we have the following estimate (see Proposition \ref{prop lowerbound} below):
\begin{itemize}
  \item for any $L \in \Eff^1 (X)$, $n_x (L) \geq \vol(L)^{1/n}$.
\end{itemize}
The same properties also hold true for the function $n : \Eff^1 (X) \rightarrow \mathbb{R}$.

\subsection{Polar transform}
By the general theory developed in \cite{lehmxiao16convexity}, given a proper closed convex cone $\mathcal{C}\subset V$ of full dimension in a real vector space $V$, let $\HConc_1 (\mathcal{C})$ be the space of real valued functions defined over $\mathcal{C}$ that are upper semicontinuous, homogeneous of degree one,
strictly positive in the interior of $\mathcal{C}$, and 1-concave. Then one can study the polar transform $\mathcal{H}: \HConc_1 (\mathcal{C}) \rightarrow \HConc_1 (\mathcal{C}^*)$:
\begin{align*}
\mathcal{H}f: \mathcal{C}^* \rightarrow \mathbb{R},\
  w^* \mapsto \inf_{v\in \mathcal{C}^\circ} \frac{w^* \cdot v}{f(v)},
\end{align*}
where $f \in \HConc_1 (\mathcal{C})$, and $\mathcal{C}^* \subset V^*$ is the dual of $\mathcal{C}$.
This is a Legendre-Fenchel type transform with a ``coupling'' function given by the logarithm. By \cite[Proposition 3.2]{lehmxiao16convexity}, we know that $\mathcal{H}f \in \HConc_1 (\mathcal{C}^*)$ whenever $f \in \HConc_1 (\mathcal{C})$, and $\mathcal{H}: \HConc_1 (\mathcal{C}) \rightarrow \HConc_1 (\mathcal{C}^*)$ is a duality transform (i.e., $\mathcal{H}$ is an order-reversing involution.).

In this paper, we apply the polar transform to the following two geometric cases:
\begin{enumerate}
  \item $\mathcal{C}=\Nef^1 (X),\ f = s_x$,
  \item $\mathcal{C}=\Eff^1 (X),\ f = n_x$.
\end{enumerate}
We show that the polar transforms of the local Seshadri and Nakayama constants for $(1,1)$-classes measure the local positivity for $(n-1, n-1)$-classes. In fact, we show the dual of $s_x$ behaves analogously to the Nakayama constant, and similarly for the dual of $n_x$.

\begin{rmk}\label{rmk surface dual}
When $X$ is a surface, it is not hard to see that $(\Nef^1 (X), s_x (\cdot))$ and $(\Eff^1 (X), n_x(\cdot))$ are dual to each other. Hence, by either Theorem \ref{thrm nak curve} or \ref{thrm sesha curve} below, we see that the polar transform of $s_x$ is actually $n_x$ in this case.
\end{rmk}

\subsubsection{Nakayama constants for $(n-1, n-1)$-classes}
For the case $(\Nef^1 (X), s_x)$, note that we have the duality of positive cones
\begin{equation*}
\Nef^1 (X)^* = \Eff_1 (X),
\end{equation*}
where $\Eff_1 (X)$ is the cone of pseudo-effective $(n-1, n-1)$-classes. Then the polar transform gives the following invariant.

\begin{defn}
For any $\alpha \in \Eff_1 (X)$, its local Nakayama constant at $x$, $N_x (\alpha)$, is defined to be the polar of $s_x: \Nef^1 (X)\rightarrow \mathbb{R}$, that is,
\begin{equation*}
  N_x (\alpha) := \mathcal{H}s_x (\alpha)=\inf_{L\in \Nef^1 (X)^\circ} \left(\frac{\alpha\cdot L}{s_x (L)}\right).
\end{equation*}
The global Nakayama constant of $\alpha$ is defined to be
\begin{equation*}
  N (\alpha) :=\inf_{x \in X} N_x (\alpha).
\end{equation*}
\end{defn}

\begin{rmk}
By the properties of polar transforms, the function $N_x (\cdot)\in \HConc_1 (\Eff_1 (X))$.
\end{rmk}

\subsubsection{Seshadri constants for $(n-1, n-1)$-classes}
For the case $(\Eff^1 (X), n_x)$, we have the duality of positive cones
\begin{equation*}
\Eff^1 (X)^* = \Mov_1 (X),
\end{equation*}
where $\Mov_1 (X)$ is the cone of movable $(n-1, n-1)$-classes. Applying the polar transform then gives:

\begin{defn}
For any $\alpha \in \Mov_1 (X)$, its local Seshadri constant at $x$, $S_x (\alpha)$, is defined to be the polar of $n_x: \Eff^1 (X)\rightarrow \mathbb{R}$, that is,
\begin{equation*}
  S_x (\alpha) := \mathcal{H}n_x (\alpha)=\inf_{L\in \Eff^1 (X)^\circ} \left(\frac{\alpha\cdot L}{n_x (L)}\right).
\end{equation*}
The global Seshadri constant of $\alpha$ is defined to be
\begin{equation*}
  S (\alpha) :=\inf_{x \in X} S_x (\alpha).
\end{equation*}
\end{defn}

\begin{rmk}
By the properties of polar transforms, the function $S_x (\cdot)\in \HConc_1 (\Mov_1 (X))$.
\end{rmk}

\begin{rmk}
In principle, it is very difficult to get the exact values of $N_x$ and $S_x$ except in the simplest situations. To get the values, one needs to know the explicit structures of positive cones, and furthermore, the values of $s_x$ and $n_x$ for $(1,1)$-classes -- whose computations in general have equal difficulties.
\end{rmk}

\begin{rmk}\label{rmk sesah def}
When $\alpha$ is a curve class, we can take the dualities of cones in the definitions with respect to the cones generated by divisor classes and curve classes. Repeating the proofs of Thrm. \ref{thrm nak curve} and Thrm. \ref{thrm sesha curve} for these curve quantities, we see that they actually agree with $N_x(\alpha)$ and $S_x(\alpha)$ as defined above -- this follows directly from the geometric characterizations and the fact that an $(n-1, n-1)$-class is in the cone generated by curve classes if and only if it is in the intersection of the transcendental cone and the N\'{e}ron-Severi space. In particular, for a movable curve class $\alpha$, it is not hard to see that $S_x$ will then have an equivalent definition:
\begin{equation*}
  S_x (\alpha) = \inf_{x\in D} \frac{\alpha \cdot D}{\mult_x (D)},
\end{equation*}
where the infimum is taken over all irreducible divisors $D$ passing through $x$ -- see Rmk \ref{rmk nak def}.
This is exactly analogous to \eqref{eq divisor sesh}.
\end{rmk}

\begin{rmk}
Even for surfaces, in general the local/global Seshadri/Nakayama functions are not differentiable everywhere in the interior of the respective positive cones (see e.g. \cite[Example 5.5]{kuronya_jetsep}).
\end{rmk}

While we focus on the local positivity at a point, the Seshadri/Nakayama constants for $(1,1)$-classes have various other generalizations, including one to measure positivity along other subvarieties. Thus, by the polar transform, one can obtain corresponding local positivity invariants for $(n-1, n-1)$-classes; see Section \ref{sec further discuss} for more discussions.

\subsection{Main results}

In \cite{xiao15}, the second named author introduced the volume function $\widehat{\vol}(\cdot)$, defined on $\Eff_1 (X)$, as the polar transform of $(\Nef^1 (X), \vol)$. It has many nice properties (see \cite{xiao15}, \cite{lehmxiao16convexity}).
By the upper bound for $s_x$ and cone duality, we get:

\begin{thmx} \label{thrm nak curve}
For any $\alpha \in \Eff_1 (X)$, its local/global Nakayama constant satisfies:
\begin{itemize}
  \item $N_x(\alpha)\geq \widehat{\vol}(\alpha)^{n-1/n}$.
  \item $N_x (\alpha)$ has the following geometric characterization:
\begin{equation*}
  N_x (\alpha) = \sup\{t\geq 0 | \pi^* \alpha + t e\ \textrm{is pseudo-effective}\},
\end{equation*}
where $\pi: Y \rightarrow X$ is the blow-up of $X$ at $x$ and $e := (-E)^{n-1}$.
\end{itemize}

\end{thmx}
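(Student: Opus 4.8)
The plan is to treat the two assertions separately. The lower bound is a formal consequence of the polarity machinery, and the geometric characterization reduces to a short intersection-theoretic computation on the blow-up $Y$ combined with the cone duality on $Y$.

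\emph{First bullet.} Note that $\vol^{1/n}$ belongs to $\HConc_1(\Nef^1(X))$: it is continuous, homogeneous of degree one, strictly positive on ample classes, and $1$-concave by the Khovanskii--Teissier inequality $\vol(L+M)^{1/n}\geq\vol(L)^{1/n}+\vol(M)^{1/n}$. Since $s_x\leq\vol^{1/n}$ pointwise on $\Nef^1(X)$ and $\mathcal H$ is order-reversing on $\HConc_1$ by \cite[Proposition 3.2]{lehmxiao16convexity}, we obtain $N_x=\mathcal Hs_x\geq\mathcal H(\vol^{1/n})$. With the normalization of $\widehat{\vol}$ from \cite{xiao15} one has $\mathcal H(\vol^{1/n})(\alpha)=\widehat{\vol}(\alpha)^{(n-1)/n}$, which gives the claimed bound; taking the infimum over $x\in X$ yields it for $N(\alpha)$ as well.

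\emph{Second bullet.} Write $\tau(\alpha):=\sup\{t\geq 0\mid \pi^*\alpha+te\in\Eff_1(Y)\}$. The geometric input is the standard description of the nef cone of a point blow-up,
\[
  \Nef^1(Y)=\{\pi^*A-t'E\ :\ A\in\Nef^1(X),\ 0\leq t'\leq s_x(A)\},
\]
which holds because every class on $Y$ is of the form $\pi^*A+bE$, nefness forces $b\leq 0$ (intersect with a line in $E$) and $A$ nef (push forward to $X$), and the admissible range of $t'=-b$ is the interval $[0,s_x(A)]$ by convexity of $\Nef^1(Y)$ and the definition \eqref{def sesh} of $s_x$. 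Using $\pi^*\alpha\cdot\pi^*A=\alpha\cdot A$, $\pi^*\alpha\cdot E=0$, $e\cdot\pi^*A=(-1)^{n-1}E^{n-1}\cdot\pi^*A=0$, and $e\cdot E=(-1)^{n-1}E^{n}=1$ — the last three because $\pi(E)$ is a point and $E^{n}=(-1)^{n-1}$ — we get $(\pi^*\alpha+te)\cdot(\pi^*A-t'E)=\alpha\cdot A-t\,t'$. Hence, by the cone duality $\Eff_1(Y)=\Nef^1(Y)^*$ on $Y$, the class $\pi^*\alpha+te$ is pseudo-effective if and only if $\alpha\cdot A-t\,t'\geq 0$ for all $A\in\Nef^1(X)$ and all $0\leq t'\leq s_x(A)$, equivalently (take $t'=s_x(A)$) $\alpha\cdot A\geq t\,s_x(A)$ for all $A\in\Nef^1(X)$, equivalently $t\leq\inf\{(\alpha\cdot A)/s_x(A):A\in\Nef^1(X),\ s_x(A)>0\}$. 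Since the polar transform may be computed as the infimum over all classes on which the function is positive, not only over the interior of the cone (a consequence of $1$-concavity, cf. \cite{lehmxiao16convexity}), this last infimum equals $N_x(\alpha)$. Therefore $\{t\geq 0\mid\pi^*\alpha+te\in\Eff_1(Y)\}=[0,N_x(\alpha)]$ and $\tau(\alpha)=N_x(\alpha)$.

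The step needing the most care is the precise description of $\Nef^1(Y)$ — in particular that the coefficient of $-E$ ranges exactly over $[0,s_x(A)]$ — together with the matching of the resulting infimum with the definition of $N_x$ via the infimum-over-$\{f>0\}$ identity for the polar transform; this is what lets the argument go through without invoking semicontinuity of $s_x$. The intersection numbers involving $e=(-E)^{n-1}$ are routine once one records $E^{n}=(-1)^{n-1}$ and $E^{n-1}\cdot\pi^*A=0$. One should also observe that $\pi^*$ sends $\Eff_1(X)$ into $\Eff_1(Y)$, so that $t=0$ lies in the set defining $\tau(\alpha)$ (this also follows a posteriori from the computation above).
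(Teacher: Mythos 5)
Your proof is correct and follows essentially the same route as the paper's: the lower bound comes from $s_x\leq\vol^{1/n}$ combined with the order-reversing property of the polar transform, and the geometric characterization from writing nef classes on the point blow-up as $\pi^*A-t'E$ with $0\leq t'\leq s_x(A)$ and applying the duality $\Nef^1(Y)^*=\Eff_1(Y)$. The only difference is that you make explicit two points the paper leaves implicit, namely the intersection numbers involving $e=(-E)^{n-1}$ and the identification of the infimum over $\Nef^1(X)^\circ$ with the infimum over $\{s_x>0\}$.
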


\begin{rmk}
The above result is mirror to the inequality $n(L) \geq \vol(L)^{1/n}$ and the characterization $n_x (L) = \sup\{t\geq 0 | \pi^* L - t E\ \textrm{is pseudo-effective}\}$ for $L \in \Eff^1 (X)$.
\end{rmk}

In \cite{xiao15}, the function $\mathfrak{M}(\cdot)$, defined on $\Mov_1 (X)$, was also introduced. It is the polar transform of $(\Eff^1 (X), \vol)$, and is thoroughly studied in \cite{lehmann2016positiivty}.

\begin{thmx}\label{thrm sesha curve}
The local/global Seshadri constant has the following properties:
\begin{itemize}
  \item For any $\alpha \in \Mov_1 (X)$, $S_x(\alpha) \leq \mathfrak{M}(\alpha)^{n-1/n}$.
  \item $S_x$ has the following geometric characterization:
      \begin{equation*}
       S_x (\alpha) = \sup\{t\geq 0 | \pi^* \alpha + t e\ \textrm{is movable}\},
      \end{equation*}
      where $\pi: Y \rightarrow X$ is the blow-up of $X$ at $x$ and $e = (-E)^{n-1}$.
   \item Suppose that $\alpha\in\Mov_1(X)$. Then $S(\alpha)>0$ if and only if $\alpha \in \Mov_1 (X)^\circ$.
\end{itemize}
\end{thmx}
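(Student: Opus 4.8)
The plan is to deduce all three statements from the general machinery of the polar transform together with the geometric characterizations of the divisor-level invariants. For the second bullet (the geometric characterization of $S_x$), I would proceed exactly as in the mirror characterization of $N_x$ promised in Theorem~\ref{thrm nak curve}: one uses cone duality on the blow-up $\pi\colon Y\to X$. The key point is that on $Y$ we have the orthogonal decomposition of Néron--Severi spaces, and the pushforward $\pi_*$ together with $\pi^*$ identify $\Eff^1(X)^* = \Mov_1(X)$ with the appropriate slice of $\Mov_1(Y)$. Concretely, $\pi^*\alpha + te$ is movable on $Y$ precisely when it pairs nonnegatively against every class in $\Eff^1(Y)^\circ$; writing a nef-at-the-generic-point divisor on $Y$ as $\pi^*L - sE$ with $s \le n_x(L)$ (this is the characterization of $n_x$ recalled in the introduction), and using $\pi^*\alpha\cdot\pi^*L = \alpha\cdot L$, $\pi^*\alpha\cdot E = 0$, $e\cdot\pi^*L = 0$, $e\cdot(-E) = 1$, one converts the condition ``$\pi^*\alpha + te$ movable'' into ``$\alpha\cdot L - ts \ge 0$ for all admissible $(L,s)$,'' i.e. $t \le \alpha\cdot L/n_x(L)$ for all big $L$. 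Taking the supremum over $t$ and the infimum over $L$ recovers $S_x(\alpha) = \mathcal H n_x(\alpha)$. The one technical wrinkle is that $\Mov_1(Y)$ is cut out by pairing with $\Eff^1(Y)$, and one must check that it suffices to test against divisors of the special form $\pi^*L - sE$; this follows because any pseudo-effective divisor on $Y$ either contains $E$ in its support (in which case subtracting $E$ only helps) or is of the stated form, combined with the standard fact that $\Eff^1(Y)$ is generated by $E$ together with classes $\pi^*L - sE$ with $L$ pseudo-effective.

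For the first bullet, the inequality $S_x(\alpha)\le \mathfrak M(\alpha)^{(n-1)/n}$, I would invoke the order-reversing property of $\mathcal H$ together with the pointwise inequality $n_x(L)\ge \vol(L)^{1/n}$ for $L\in\Eff^1(X)$ recalled in the introduction (Proposition~\ref{prop lowerbound}). Since $n_x \ge \vol^{1/n}$ pointwise and $\mathcal H$ reverses order, we get $S_x = \mathcal H n_x \le \mathcal H(\vol^{1/n})$. It remains to identify $\mathcal H(\vol^{1/n})$ with $\mathfrak M(\cdot)^{(n-1)/n}$. This is a formal computation: $\mathcal H(\vol^{1/n})(\alpha) = \inf_L \alpha\cdot L/\vol(L)^{1/n}$, and comparing with the definition $\mathfrak M(\alpha) = \mathcal H\vol(\alpha) = \inf_L \alpha\cdot L/\vol(L)$ one checks, using homogeneity (degree $n$ of $\vol$, degree $1$ of $\alpha\cdot(\cdot)$), that these two polar transforms are related by the exponent $(n-1)/n$. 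I expect this to be a short reindexing/homogeneity argument, possibly already recorded in \cite{lehmxiao16convexity}; if so I would simply cite it.

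The third bullet is the substantive one and where I expect the main obstacle. The direction ``$\alpha\in\Mov_1(X)^\circ \Rightarrow S(\alpha)>0$'' should follow from the geometric characterization together with a compactness/uniformity argument: if $\alpha$ is in the open movable cone, then $\pi^*\alpha$ has ``room to spare,'' and one wants a uniform $t>0$, independent of the point $x$, such that $\pi_x^*\alpha + te_x$ stays movable — here the family of blow-ups over $x\in X$ must be controlled, e.g. by working on the blow-up of $X\times X$ along the diagonal, so that the exceptional classes $e_x$ vary in a bounded family, and then using that $\Mov_1$ is an open condition. The converse, ``$S(\alpha)>0 \Rightarrow \alpha\in\Mov_1(X)^\circ$,'' is the analogue of the hard direction of Seshadri's criterion. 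I would argue contrapositively: if $\alpha$ lies on the boundary $\partial\Mov_1(X)$, then by cone duality there is a nonzero $L\in\Eff^1(X)$ with $\alpha\cdot L = 0$; one then needs to produce a point $x$ at which $n_x(L)>0$, for then $S_x(\alpha)\le \alpha\cdot L/n_x(L) = 0$. Since $L$ is pseudo-effective and nonzero, $n_x(L)>0$ for at least one $x$ (indeed for $x$ a general point of the support of any divisor in $L$, or more robustly using that the locus where $n_x(L) = 0$ is a proper subset when $L\neq 0$) — making this last step precise, i.e. ensuring the existence of such an $x$ even for non-big boundary classes $L$, is the delicate point and may require invoking the lower bound $n_x(L)\ge\vol(L)^{1/n}$ on a suitable birational model together with an argument that $L$ can be assumed big after perturbation, or a direct Lelong-number argument. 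I would first settle the surface case via Remark~\ref{rmk surface dual} as a sanity check, then push the argument through in general.
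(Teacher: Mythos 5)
Your first two bullets track the paper's own proof: the upper bound is exactly Proposition~\ref{prop lowerbound} fed into the infimum defining $S_x$, and the geometric characterization is the same cone-duality computation on the blow-up (modulo the small sign slip: $e\cdot E=1$, not $e\cdot(-E)=1$). The genuine gap is in the third bullet, in the direction $S(\alpha)>0\Rightarrow\alpha\in\Mov_1(X)^\circ$. Your contrapositive hinges on the claim that a nonzero $L\in\Eff^1(X)$ with $\alpha\cdot L=0$ must satisfy $n_x(L)>0$ for some $x$. This is false: take $X$ a simple abelian surface of Picard number $2$ and $L$ an irrational boundary class of $\Nef^1(X)=\Eff^1(X)$ with $L^2=0$; if $\pi^*L-cE$ were pseudo-effective for some $c>0$, its Zariski decomposition on the blow-up would force an integral isotropic curve class on $X$ proportional to $L$, which cannot exist, so $n_x(L)\equiv 0$. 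Your proposed repairs do not apply: such an $L$ can never be made big while keeping $\alpha\cdot L=0$ (a big class pairs strictly positively with any nonzero movable $\alpha$), and $\vol(L)^{1/n}=0$. The paper avoids the problem by a different reduction: from $S(\alpha)\le\mathfrak M(\alpha)^{(n-1)/n}$ it first gets $\mathfrak M(\alpha)>0$, then invokes the structure theorem of Lehmann--Xiao to write $\alpha=\langle L_\alpha^{n-1}\rangle$ for a big movable class $L_\alpha$, and shows $E_{nK}(L_\alpha)$ has no divisorial component: if $D$ were one, the orthogonality result of Boucksom--Favre--Jonsson (and its transcendental extension) gives $\alpha\cdot D=0$ while $D$ is an honest prime divisor with $\nu([D],x)=\mult_x(D)\ge 1$ for $x\in D$, forcing $S_x(\alpha)=0$. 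The structure theorem is precisely the ingredient your sketch lacks; it converts the abstract dual boundary class into an effective divisor whose Nakayama constant is visibly positive, and the boundary classes with $\mathfrak M(\alpha)=0$ are disposed of by the first bullet alone.

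For the forward direction your family-of-blow-ups argument over $X\times X$ is both heavier than necessary and, as sketched, does not actually secure uniformity in $x$: openness of $\Mov_1(Y_x)$ gives some $t_x>0$ for each $x$ but no lower bound. The paper gets uniformity for free from the definition: $\alpha-A^{n-1}\in\Mov_1(X)$ gives $\alpha\cdot L\ge A^{n-1}\cdot L$ for every pseudo-effective $L$, while Lelong numbers are uniformly controlled by mass, $n_x(L)\le c\,(L\cdot A^{n-1})$ with $c$ independent of $x$ (Proposition~\ref{prop nak upper}); dividing yields $S(\alpha)\ge c^{-1}>0$. I would replace your uniformity argument by this one-line estimate and, for the converse, restructure the proof around $\mathfrak M(\alpha)>0$ and the $(n-1)$-st root $L_\alpha$.
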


Let $L \in \Nef^1 (X)$. By definition, it is clear that $s_x (L) \leq n_x(L)$ for any $x\in X$. Due to the estimates in Theorem \ref{thrm nak curve} and Theorem \ref{thrm sesha curve}, and since $\widehat{\vol}(\alpha) \geq \mathfrak{M}(\alpha)$ for any $\alpha \in \Mov_1 (X)$, we immediately get:

\begin{prop}
Let $\alpha \in \Mov_1 (X)$. Then $S_x (\alpha)\leq N_x (\alpha)$ for any $x\in X$.
\end{prop}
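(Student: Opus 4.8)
The plan is to read the inequality off the two volume-type estimates already established in Theorems~\ref{thrm nak curve} and~\ref{thrm sesha curve}. Fix $\alpha \in \Mov_1(X)$ and a point $x \in X$; since $\Mov_1(X) \subseteq \Eff_1(X)$, both $N_x(\alpha)$ and $S_x(\alpha)$ are defined. The first bullet of Theorem~\ref{thrm sesha curve} gives $S_x(\alpha) \leq \mathfrak{M}(\alpha)^{(n-1)/n}$, while the first bullet of Theorem~\ref{thrm nak curve} gives $N_x(\alpha) \geq \widehat{\vol}(\alpha)^{(n-1)/n}$. Because $t \mapsto t^{(n-1)/n}$ is nondecreasing on $[0,\infty)$, it then suffices to compare $\mathfrak{M}(\alpha)$ with $\widehat{\vol}(\alpha)$, and the chain
\[
S_x(\alpha) \leq \mathfrak{M}(\alpha)^{(n-1)/n} \leq \widehat{\vol}(\alpha)^{(n-1)/n} \leq N_x(\alpha)
\]
closes once we know $\mathfrak{M}(\alpha) \leq \widehat{\vol}(\alpha)$. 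Taking $\inf_{x\in X}$ then also yields the global statement.

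So the only step with any content is the inequality $\widehat{\vol}(\alpha) \geq \mathfrak{M}(\alpha)$ on $\Mov_1(X)$, and even this is granted in the statement's setup. Conceptually it holds because $\widehat{\vol}$ is the polar transform of $(\Nef^1(X), \vol)$ and $\mathfrak{M}$ is the polar transform of $(\Eff^1(X), \vol)$: the infimum defining $\mathfrak{M}(\alpha)$ is taken over the larger cone $\Eff^1(X)^\circ \supseteq \Nef^1(X)^\circ$, and enlarging the domain of such a polar-type infimum can only decrease its value (this is recorded in \cite{xiao15} and studied in detail in \cite{lehmann2016positiivty}). No genuine obstacle arises; the work is entirely absorbed into the cited Theorems~\ref{thrm nak curve}, \ref{thrm sesha curve} and this monotonicity of polars.

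It is worth recording a second, purely geometric route that sidesteps the volume functions: by the geometric characterizations in Theorems~\ref{thrm nak curve} and~\ref{thrm sesha curve}, on the blow-up $\pi: Y \to X$ at $x$ with $e = (-E)^{n-1}$ one has $S_x(\alpha) = \sup\{t \geq 0 \mid \pi^*\alpha + te \text{ is movable}\}$ and $N_x(\alpha) = \sup\{t \geq 0 \mid \pi^*\alpha + te \text{ is pseudo-effective}\}$. Since $\Mov_1(Y) \subseteq \Eff_1(Y)$, every $t$ admissible for $S_x(\alpha)$ is admissible for $N_x(\alpha)$, so the two suprema satisfy $S_x(\alpha) \leq N_x(\alpha)$ directly, and again $\inf_x$ gives $S(\alpha) \leq N(\alpha)$. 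Either way the argument is a single line on top of the main theorems.
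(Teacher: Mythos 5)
Your first argument is exactly the paper's proof: the chain $S_x(\alpha)\leq \mathfrak{M}(\alpha)^{(n-1)/n}\leq \widehat{\vol}(\alpha)^{(n-1)/n}\leq N_x(\alpha)$, read off from the first bullets of Theorems \ref{thrm sesha curve} and \ref{thrm nak curve} together with the inequality $\mathfrak{M}(\alpha)\leq\widehat{\vol}(\alpha)$ on $\Mov_1(X)$ (which the paper records in Section \ref{sec prel} and invokes explicitly). Your second, geometric route via the blow-up characterizations and the inclusion $\Mov_1(Y)\subseteq\Eff_1(Y)$ is also correct, but it is not the argument the paper gives.
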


By the structure theorem in \cite[Theorem 1.10 and Corollary 3.15]{lehmann2016positiivty}, for any $\alpha\in \Mov_1 (X)$ with $\mathfrak{M}(\alpha)>0$, there is a unique big movable $(1,1)$ class $L_{\alpha}$ (the ``$(n-1)$-th root of $\alpha$'') such that $\alpha = \langle L_{\alpha}^{n-1} \rangle$, and $\alpha$ is on the boundary of $\Mov_1 (X)$ if and only if the non-K\"ahler locus $E_{nK}(L_{\alpha})$ has some divisorial component. The following result characterizes the vanishing locus
\begin{equation*}
\mathcal{V}(\alpha):=\{x \in X\,|\, S_x (\alpha) =0\}.
\end{equation*}

\begin{thmx}\label{thrm local sesha curve}
Let $\alpha\in \Mov_1 (X)$ be a class on the boundary of $\Mov_1 (X)$ with $\mathfrak{M}(\alpha)>0$. Then we have:
\begin{equation*}
  S_x (\alpha)=0 \Leftrightarrow x \in\textrm{the divisorial components of}\  E_{nK}(L_{\alpha}).
\end{equation*}
\end{thmx}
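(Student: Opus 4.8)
The plan is to reduce the statement to a careful analysis of when $\pi^*\alpha + t e$ fails to be movable on the blow-up, using the geometric characterization of $S_x(\alpha)$ from Theorem \ref{thrm sesha curve}. Concretely, $S_x(\alpha) = 0$ means $\pi^*\alpha + te$ is not movable for any $t > 0$, equivalently $\pi^*\alpha$ lies on the boundary of $\Mov_1(Y)$ in such a way that moving in the $e$-direction immediately exits the cone. I would first translate this into a statement about the $(n-1)$-th root: since $\mathfrak{M}(\alpha) > 0$, we have $\alpha = \langle L_\alpha^{n-1}\rangle$ for a unique big movable class $L_\alpha$, and I would want to say that $\pi^*\alpha + te$ being movable for small $t$ corresponds (via the structure theory of \cite{lehmann2016positiivty}) to $\pi^*L_\alpha - sE$ being big/movable for small $s$, i.e. to the Seshadri-type positivity of $L_\alpha$ at $x$ in the pseudoeffective/movable sense. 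The key point is that $x$ lies on a divisorial component of $E_{nK}(L_\alpha)$ precisely when $\pi^*L_\alpha - sE$ has a fixed divisorial part through $E$ for all $s>0$, which obstructs the movability of $\pi^*\alpha + te$.

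The main steps, in order: (1) Using Theorem \ref{thrm sesha curve} and the relation $\Eff^1(X)^* = \Mov_1(X)$, rewrite $S_x(\alpha) = 0$ as: for every $t>0$ there is a pseudoeffective divisor class $D$ on $Y$ with $(\pi^*\alpha + te)\cdot D < 0$, and analyze which such $D$ can occur — by positivity of $\pi^*\alpha$ against effective curves not contracted by $\pi$, the obstructing $D$ must be supported on (or involve) the exceptional divisor $E$, or be the strict transform of a divisor through $x$. (2) Invoke the structure theorem to pass between $\alpha$ and $L_\alpha$: I expect to show $\pi^*\alpha + te \in \Mov_1(Y)$ for small $t$ iff $\pi^*L_\alpha - sE$ remains big (hence its modified nef/movable data persist) for small $s$, using that the $(n-1)$-st power map is continuous and compatible with $\pi^*$ up to corrections supported on $E$; the term $te$ on the curve side should match $-sE$ raised appropriately on the divisor side (note $e = (-E)^{n-1}$). (3) Recall/cite the description of $E_{nK}(L_\alpha)$: its divisorial components are exactly the divisors $D$ such that the restricted positive product or the Zariski-type decomposition of $L_\alpha$ has $D$ in its negative part with positive coefficient; equivalently $\nu(L_\alpha, D) > 0$. (4) Combine: $x$ lies on such a divisorial component $\iff$ every small perturbation $\pi^*L_\alpha - sE$ picks up $E$ (and the strict transform of $D$) in its fixed part $\iff \pi^*\alpha + te$ exits $\Mov_1(Y)$ for all $t>0$ $\iff S_x(\alpha)=0$. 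For the reverse direction, if $x$ avoids all divisorial components of $E_{nK}(L_\alpha)$, then $L_\alpha$ is "locally modified Kähler" near $x$ in a suitable sense, and I would produce an explicit movable class on $Y$ dominating $\pi^*\alpha$ in the $e$-direction, e.g. by taking $\langle (\pi^*L_\alpha - sE)^{n-1}\rangle$ for small $s$ and checking it is $\geq \pi^*\alpha + te$ modulo movable classes.

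The hard part will be Step (2): making precise the correspondence between the curve-side perturbation $\pi^*\alpha + te$ and a divisor-side perturbation of $L_\alpha$, since the $(n-1)$-st positive product does not commute with $\pi^*$ on the nose — one only has $\pi^*\langle L_\alpha^{n-1}\rangle = \langle(\pi^*L_\alpha)^{n-1}\rangle$ together with error terms supported on $E$, and one must control the sign and size of those error terms to get the equivalence $\pi^*\alpha + te$ movable $\iff$ $\pi^*L_\alpha - sE$ big. A secondary subtlety is that $\mathfrak{M}$ and the structure theory identify $L_\alpha$ only as a movable class, so I will need the results of \cite{lehmann2016positiivty} (especially Theorem 1.10 and Corollary 3.15 cited above) in the form that relates boundary behavior of $\alpha$ in $\Mov_1$ to the divisorial part of $E_{nK}(L_\alpha)$, and then localize that global statement at the point $x$ via the blow-up. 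I expect the local version to follow from the global one by applying it on $Y$ to the class $\pi^*L_\alpha$ and using that $E_{nK}(\pi^*L_\alpha) = \pi^{-1}(E_{nK}(L_\alpha)) \cup E$ up to components, so that the divisorial components through $E$ detect exactly whether $x \in E_{nK}(L_\alpha)$ is a divisorial component downstairs.
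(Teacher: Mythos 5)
Your proposal diverges substantially from the paper's proof, and two of its load-bearing steps are either wrong or left unresolved. First, your Step (3) characterizes the divisorial components of $E_{nK}(L_\alpha)$ by ``$D$ in the negative part with positive coefficient, equivalently $\nu(L_\alpha,D)>0$.'' But $L_\alpha$ is big and \emph{movable}, so its divisorial Zariski decomposition has trivial negative part and $\nu(L_\alpha,D)=0$ for every prime divisor $D$; your criterion would declare $E_{nK}(L_\alpha)$ to have no divisorial components at all, contradicting the hypothesis that $\alpha$ is on the boundary of $\Mov_1(X)$. The correct criterion, which the paper uses, is $\alpha\cdot D=\langle L_\alpha^{n-1}\rangle\cdot D=0$ (Theorem D of \cite{nystrom2016duality}); with it the easy direction is one line: $S_x(\alpha)\leq \alpha\cdot D/\nu(D,x)=0$ for $x\in D$. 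Second, the equivalence you propose in Step (2) --- $\pi^*\alpha+te$ movable for small $t$ iff $\pi^*L_\alpha-sE$ remains big for small $s$ --- cannot be right as stated, since bigness is an open condition and $\pi^*L_\alpha-sE$ is big for all small $s$ regardless of where $x$ sits. The genuine content would be controlling the positive product $\langle(\pi^*L_\alpha-sE)^{n-1}\rangle$ and its exceptional corrections, which you acknowledge as ``the hard part'' but do not carry out; the positive product is not multilinear and the cross terms do not vanish as they would for a nef class, so this is not a gap one can wave past.

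The paper's argument takes a different and more elementary route that you should compare with: it works directly from the polar-transform definition, takes a minimizing sequence $L_k$ with $n_x(L_k)=1$ and $\alpha\cdot L_k\to 0$, and applies the divisorial Zariski decomposition $L_k=P(L_k)+N(L_k)$. Since $\mathfrak{M}(\alpha)>0$ forces $\alpha$ to pair strictly positively with nonzero movable $(1,1)$-classes, the positive parts must tend to zero, so $n_x(N(L_k))\to 1$ and the Lelong mass concentrates on the finitely many ($\leq\rho(X)$) exceptional primes in each $N(L_k)$. Extracting a sequence of exceptional primes $D_k\ni x$ with $\alpha\cdot D_k/\nu(D_k,x)\to 0$ and invoking \cite[Lemma 3.15]{Bou04} (rays of exceptional primes accumulate only on $\Mov^1(X)$) shows some single prime $D\ni x$ repeats with $\alpha\cdot D=0$, hence $D$ is a divisorial component of $E_{nK}(L_\alpha)$. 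If you want to salvage your blow-up approach, you would at minimum need to replace Step (3) by the intersection-theoretic criterion and replace ``big'' in Step (2) by a quantitative statement about the negative part of $\pi^*L_\alpha-sE$ along $E$ and along strict transforms, at which point you are essentially redoing the paper's Zariski-decomposition analysis upstairs.
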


\begin{rmk}
Let $L\in \Nef^1 (X)$, then by \cite[Theorem 2.7]{tosatti-seshadri} (which in turn is a simple application of \cite{collinstosatti_kahlercurrent}) we have  that
\begin{equation*}
  s_x (L)=0 \Leftrightarrow x \in E_{nK}(L).
\end{equation*}
Let $L \in \Eff^1 (X)$ be a big class, and let $\alpha = \langle L^{n-1}\rangle$. By the proof of Theorem \ref{thrm local sesha curve} in Section \ref{sec local pos}, we actually have a slight improvement
\begin{equation*}
  S_x (\alpha)=0 \Leftrightarrow x \in\textrm{the divisorial components of}\  E_{nK}(L).
\end{equation*}
Thus, the invariant $S_x (\cdot)$ can detect more information.
\end{rmk}

\begin{rmk}
While our discussions are focused on $(n-1, n-1)$-classes over smooth projective varieties defined over $\mathbb{C}$, the results of course holds true for any curve classes.
\end{rmk}

\subsection{Relation to other work} Independently, M.~Fulger \cite{fulger2017seshadri} has also studied Seshadri constants for movable curve classes, and his work has substantial overlap with the present paper. The starting point of his work is the equivalent definition for $S_x$ in Remark \ref{rmk sesah def}. For movable curve classes, he has obtained the same results as in Theorem \ref{thrm sesha curve}, and a different (slightly weaker, but essentially the same) result as that in Theorem \ref{thrm local sesha curve}. Besides these results, analogous to \cite{demaillySeshadri}, he also gave a ``jet separation'' interpretation for $S_x(\alpha)$ when $\alpha=\{[C]\}$ is an integral class in the interior of $\Mov_1 (X)$. He also discussed various interesting examples and the analogy of $S_x$ for nef dual $(k,k)$ cycle classes.

\subsection{Organization}
In Section \ref{sec prel}, we give a brief introduction to positivity, and recall some background material. Section \ref{sec nak divisor} is devoted to the study of Nakayama constants for $(1,1)$-classes, which will be applied to the study of local positivity for $(n-1, n-1)$-classes. In Section \ref{sec local pos}, we give the proof of the main results and discuss various generalizations of local positivity invariants for $(n-1, n-1)$-classes.

\subsection*{Acknowledgement}
We would like to thank B.~Lehmann and V.~Tosatti for helpful comments on this work. We also would like to thank M.~Fulger for his nice correspondences and sharing us with his preprint. The first named author would also like to express extra gratitude to V.~Tosatti for his continued interest and encouragement on this project. The first named author was also supported in part by the National Science Foundation grant ``RTG: Analysis on Manifolds'' at Northwestern University.

\section{Preliminaries}\label{sec prel}

Throughout the paper, with a few expectations, the capital letter $L$ will denote a $(1,1)$-class and the Greek letter $\alpha$ will denote an $(n-1, n-1)$-class.

\subsection{Positive classes}
Let $X$ be a smooth projective variety of dimension $n$. We will let $H^{1,1}(X, \mathbb{R})$ denote the real de Rham cohomology group of bidegree $(1,1)$. We will be interested in the following cones in $H^{1,1}(X,\mathbb{R})$:
\begin{itemize}
  \item $\Nef^1 (X)$: the cone of nef $(1,1)$-classes, i.e., the closure of the K\"ahler cone;
  \item $\Mov^{1}(X)$: the cone of movable $(1,1)$-classes, i.e., the closed cone generated by classes of the form $\mu_* \widetilde{A}$, where $\mu$ is a modification and the $\widetilde{A}$ is a K\"ahler class upstairs.
  \item $\Eff^1 (X)$: the cone of pseudo-effective $(1,1)$-classes, i.e., the cone generated by classes which contain a positive current.
\end{itemize}
Classes in the interior of $\Eff^{1}(X)$ are known as big classes; these are exactly the classes admitting a K\"ahler current (i.e. strictly positive current), or equivalently, having strictly positive volume. As a consequence of Demailly's regularization theorem \cite{demailly1992regularization}, any big class contains a K\"ahler current with analytic singularities. Let $L \in \Eff^1 (X)^\circ$
be a big class. Its non-K\"ahler locus is defined to be
\begin{equation*}
  E_{nK}(L) = \bigcap_{T_+ \in L} \Sing(T_+),
\end{equation*}
where the intersection is taken over all K\"ahler currents $T_+ \in L$ with analytic singularities, and $\Sing(T_+)$ is the singular set of $T_+$. It is proved in \cite{Bou04} that there is a K\"ahler current $T_L \in L$ with analytic singularities such that $E_{nK}(L) = \Sing(T_L)$. The ample locus $\Amp(L)$ of $L$ is the complement of $E_{nK}(L)$:
\begin{equation*}
\Amp(L) = X \setminus E_{nK}(L).
\end{equation*}

\begin{rmk}
When $L$ is given by a big divisor class, we have $E_{nK}(L) = \mathbb{B}_+ (L)$ -- the augmented base locus of $L$.
\end{rmk}

Let $H^{n-1,n-1}(X, \mathbb{R})$ denote the real de Rham cohomology group of bidegree $(n-1,n-1)$. We will be interested in the following cones in $H^{n-1,n-1}(X,\mathbb{R})$:
\begin{itemize}
\item $\Eff_1 (X)$: the cone of pseudo-effective $(n-1,n-1)$-classes, i.e., the closed cone generated by classes which contain a positive current;
\item $\Mov_{1}(X)$: the cone of movable $(n-1,n-1)$-classes, i.e., the closed cone generated by classes of the form $\mu_* (\widetilde{A}_1 \cdot ... \cdot \widetilde{A}_{n-1})$, where $\mu$ is a modification and the $\widetilde{A}_i$ are K\"ahler classes upstairs.
\end{itemize}

By \cite{DP04}, \cite{BDPP13} (and its extension to the transcendental case \cite{nystrom2016duality}), we have the following dualities of positive cones:
\begin{equation*}
\Nef^1 (X)^* = \Eff_1 (X),\ \ \Eff^1 (X)^* = \Mov_1 (X).
\end{equation*}

\begin{rmk}
The positive cone $\Mov_1 (X)$ can also be defined by positive products, denoted by $\langle-\rangle$, of pseudo-effective $(1,1)$-classes (see e.g. \cite{BDPP13}).
\end{rmk}

\subsection{Volume functions for $(n-1, n-1)$-classes}

In \cite{xiao15}, the polar transform was applied to the volume function on $(1,1)$-classes over both cones $\Eff^1(X)$ and $\Nef^1(X)$, defining two volume type functions for $(n-1, n-1)$-classes. For classes in $\Eff_1 (X)$, we have the polar of $(\Nef^1 (X), \vol)$:
\begin{align*}
  \widehat{\vol}: &\Eff_1 (X) \rightarrow \mathbb{R},\\
   &\alpha \mapsto \widehat{\vol}(\alpha) = \inf_{L \in \Nef^1 (X)^\circ} \left(\frac{\alpha\cdot L}{\vol(L)^{1/n}}\right)^{n/n-1}.
\end{align*}
For classes in $\Mov_1 (X)$, we have the polar of $(\Eff^1 (X), \vol)$:
\begin{align*}
\mathfrak{M}: &\Mov_1 (X) \rightarrow \mathbb{R},\\ &\alpha \mapsto \mathfrak{M}(\alpha) = \inf_{L \in \Eff^1 (X)^\circ} \left(\frac{\alpha\cdot L}{\vol(L)^{1/n}}\right)^{n/n-1}.
\end{align*}

These two volume functions will give the lower/upper bounds for the Seshadri and Nakayama functions of $(n-1, n-1)$-classes. Note that we always have $\mathfrak{M}(\alpha)\leq\widehat{\vol}(\alpha)$ for $\alpha\in\Mov_1(X)$.

\subsection{Lelong numbers}
Recall that the Lelong number of a closed positive $(1,1)$-current $T$ at a point $x\in X$ is defined to be:
\begin{equation*}
\nu(T,x) := \lim_{r\rightarrow 0^+} \frac{1}{r^{2(n-1)}}\int_{B_r(x)} T\wedge(dd^c\abs{z}^2)^{n-1},
\end{equation*}
where $(U,z)$ is a local coordinate chart around $x$ and $n$ is the dimension of $X$. It is shown in \cite[Chapter 3]{Dem_AGbook} that $\nu(T,x)$ is independent of coordinate chart, and that the numbers
\begin{equation*}
\nu(T,x,r) := \frac{1}{r^{2(n-1)}}\int_{B_r(x)} T\wedge(dd^c\abs{z}^2)^{n-1}
\end{equation*}
decrease to $\nu(T,x)$ as $r$ tends to zero. It is also shown that Lelong numbers are upper semicontinuous in both arguments:

\begin{lem}
\label{lem lelong semicontinuous}
The function $x\mapsto \nu(T, x)$ is upper semicontinuous on $x$;
The function $T\mapsto \nu(T, x)$ is also upper semicontinuous on $T$.
\end{lem}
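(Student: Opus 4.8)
The plan is to exhibit $\nu(T,x)$ as an infimum (in fact a decreasing limit) of functions that are visibly upper semicontinuous in each variable, and then invoke the elementary fact that an infimum of upper semicontinuous functions is upper semicontinuous. Since $\nu(T,x)$ does not depend on the coordinate chart, fix one, $(U,z)$, and write $\sigma_T := T\wedge(dd^c\abs{z}^2)^{n-1}$ for the associated trace measure, a locally finite positive measure on $U$, so that $\nu(T,x,r)=r^{-2(n-1)}\sigma_T(B_r(x))$. The one preparatory point is to replace the open balls by closed ones: set $g_r(T,x):=r^{-2(n-1)}\sigma_T(\overline{B_r(x)})$, defined for $r$ small enough that $\overline{B_r(x)}\subset U$. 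Then $\nu(T,x)=\inf_{r>0} g_r(T,x)$. Indeed $\nu(T,x,r)\le g_r(T,x)$ trivially; and for $0<r<r'$ one has $\overline{B_r(x)}\subset B_{r'}(x)$, hence $g_r(T,x)\le (r'/r)^{2(n-1)}\,\nu(T,x,r')$, so letting $r\uparrow r'$ gives $\inf_{\rho>0}g_\rho(T,x)\le \nu(T,x,r')$ for every $r'$, while the monotone convergence $\nu(T,x,r)\downarrow\nu(T,x)$ recalled above gives the reverse inequality $\inf_{\rho>0}g_\rho(T,x)\ge\nu(T,x)$.

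It then remains to see that each $g_r$ is upper semicontinuous separately in $x$ and in $T$. For fixed $T$ and $r$: if $x_j\to x$, then for any $\varepsilon>0$ one has $\overline{B_r(x_j)}\subset \overline{B_{r+\varepsilon}(x)}$ for $j$ large, so $\limsup_j \sigma_T(\overline{B_r(x_j)})\le \sigma_T(\overline{B_{r+\varepsilon}(x)})$, and letting $\varepsilon\downarrow 0$ and using continuity from above of the finite measure $\sigma_T$ along the nested compacts $\overline{B_{r+\varepsilon}(x)}$ yields $\limsup_j g_r(T,x_j)\le g_r(T,x)$. For fixed $x$ and $r$: if $T_j\to T$ weakly, then $\sigma_{T_j}\to\sigma_T$ weakly as positive measures, because $(dd^c\abs{z}^2)^{n-1}$ is a smooth form (pairing a test function $\phi$ against $\sigma_{T_j}$ is the same as pairing $\phi\,(dd^c\abs{z}^2)^{n-1}$ against $T_j$). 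The portmanteau inequality $\limsup_j \mu_j(K)\le\mu(K)$ for compact $K$ — obtained by sandwiching $\mathbbm{1}_K$ between continuous compactly supported bump functions and shrinking them to $\mathbbm{1}_K$ — applied to $K=\overline{B_r(x)}$ gives $\limsup_j g_r(T_j,x)\le g_r(T,x)$. (The same two-step estimate in fact shows that $g_r$ is jointly upper semicontinuous in $(T,x)$.)

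Granting the two previous paragraphs, $x\mapsto\nu(T,x)$ and $T\mapsto\nu(T,x)$ are each an infimum of a family of upper semicontinuous functions, hence upper semicontinuous, which is the assertion of the lemma.

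The only genuinely delicate step is the passage from open to closed balls. With the open-ball normalization, $x\mapsto\nu(T,x,r)$ is only \emph{lower} semicontinuous in $x$ (a ball can grow, but not shrink, under a small perturbation of its centre), so one cannot take infima of these functions directly; the closed-ball surrogate $g_r$ corrects the direction of semicontinuity, and the monotonicity of $r\mapsto\nu(T,x,r)$ is exactly what guarantees that the infimum of the $g_r$ still computes $\nu(T,x)$. Everything after that reduction is soft measure theory.
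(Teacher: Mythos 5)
Your proof is correct; the paper offers no proof of this lemma at all, simply citing Demailly's book, and your argument --- realizing $\nu(T,x)$ as the infimum over $r$ of the closed-ball averages $g_r(T,x)$, each upper semicontinuous in $x$ by continuity from above of $\sigma_T$ along the nested compacts $\overline{B_{r+\varepsilon}(x)}$ and in $T$ by the portmanteau inequality for weakly convergent positive measures --- is essentially the standard one found there. You also correctly isolate the one genuinely delicate point, namely that the open-ball functionals $\nu(T,x,r)$ are only lower semicontinuous in $x$, so the passage to closed balls (justified by the monotonicity of $r\mapsto\nu(T,x,r)$) is what makes the ``infimum of u.s.c.\ functions'' argument go through.
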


We shall need a slight strengthening of the above result:

\begin{lem}\label{lem lelong semicontinuous two}
Suppose that $x_k\rightarrow x$, and that $T_k\geq 0$ are positive $(1,1)$-currents that converge weakly to a positive current $T$. Then
\begin{equation*}
\nu(T,x)\geq \limsup_{k\rightarrow\infty} \nu(T_k,x_k).
\end{equation*}
\end{lem}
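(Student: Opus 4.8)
The plan is to reduce Lemma \ref{lem lelong semicontinuous two} to the case $x_k = x$ for all $k$ (which is exactly the second half of Lemma \ref{lem lelong semicontinuous}) by exploiting the monotone approximation of Lelong numbers by the averaged quantities $\nu(T,x,r)$. The key elementary observation is a ``triangle inequality'' for the centers: if $B_r(x_k) \subset B_{r+|x_k-x|}(x)$, then comparing the defining integrals gives a bound of the shape
\begin{equation*}
\nu(T_k, x_k, r) \leq \left(\frac{r + |x_k - x|}{r}\right)^{2(n-1)} \nu(T_k, x, r + |x_k - x|),
\end{equation*}
after writing both averages over the larger ball $B_{r+|x_k-x|}(x)$ and using positivity of $T_k \wedge (dd^c|z|^2)^{n-1}$ to drop the integral over the annulus. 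So first I would fix a local chart $(U,z)$ around $x$ and, for a fixed radius $r>0$ small enough that $\overline{B_{2r}(x)} \subset U$, record this comparison for all $k$ large (so that $|x_k - x| < r$, say).

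Next I would take $k \to \infty$ in this inequality. Since $T_k \to T$ weakly and $B_{r+\delta}(x)$ can be taken with $T$-negligible boundary for a.e. $\delta$, the right-hand side converges: $\nu(T_k, x, r+|x_k-x|) \to \nu(T, x, r)$ as $k \to \infty$ (using $|x_k - x| \to 0$ and, if necessary, the monotonicity of $s \mapsto \nu(T,x,s)$ to squeeze between nearby radii with negligible boundary spheres). Meanwhile $\nu(T_k, x_k) \leq \nu(T_k, x_k, r)$ by the monotone decrease. Combining, $\limsup_k \nu(T_k, x_k) \leq \nu(T, x, r)$. Finally, letting $r \to 0^+$ and using that $\nu(T,x,r) \downarrow \nu(T,x)$ yields $\limsup_k \nu(T_k, x_k) \leq \nu(T,x)$, which is the claim.

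The main technical obstacle is the weak-convergence step: weak convergence of the currents $T_k$ controls integrals against continuous test forms, but the averaged Lelong numbers $\nu(T_k, x, s)$ are integrals over a ball, i.e. against the indicator of $B_s(x)$, which is not continuous. The standard fix is to sandwich the indicator between continuous functions supported slightly inside and slightly outside $B_s(x)$, giving $\limsup$ and $\liminf$ bounds that pinch to the honest value whenever the sphere $\partial B_s(x)$ carries no mass for the trace measure of $T$; since the trace measure is a fixed positive measure, this fails for at most countably many radii $s$, so one simply chooses a sequence of admissible radii $s = r + |x_k - x|$ (or perturbs $r$ slightly) avoiding the bad set. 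One must also check that the coefficient $\big((r+|x_k-x|)/r\big)^{2(n-1)} \to 1$, which is immediate. No deep input beyond Lemma \ref{lem lelong semicontinuous} and the monotonicity of $\nu(T,x,r)$ recalled above is needed; the argument is a careful bookkeeping of radii and a routine regularization of indicator functions.
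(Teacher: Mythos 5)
Your proposal is correct and follows essentially the same route as the paper's own proof: both compare $\nu(T_k,x_k,r)$ with $\nu(T_k,x,r+\varepsilon)$ via the inclusion $B_r(x_k)\subset B_{r+\varepsilon}(x)$, pass to the limit in $k$ using weak convergence of the trace measures, and then let $r\to 0$ using the monotone decrease of $\nu(T,x,r)$. Your extra care about regularizing indicator functions (or, equivalently, invoking the Portmanteau upper bound for closed balls) is a point the paper leaves implicit, but it does not change the argument.
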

\begin{proof}
 Fix a coordinate chart $U$ around $x$ such that for all $k$ large enough $x_k \in U$. By the definition of Lelong numbers, we have
\begin{equation*}
\nu(T_k,x_k)= \lim_{r\rightarrow 0} \nu(T_k,x_k,r).
\end{equation*}
Let $v:= \limsup_{k\rightarrow\infty}\nu(T_k,x_k)$, and fix a subsequence (which we shall again call $T_k$) so that
\begin{equation*}
\nu(T_k,x_k)\rightarrow v.
\end{equation*}
Note that the positive measures $ T_k\wedge(dd^c |z|^2)^{n-1}$ converge weakly to the positive measure $T\wedge(dd^c |z|^2)^{n-1}$. Fix an $r > 0$ sufficiently small and let $\varepsilon > 0$ be arbitrary. Then $|x - x_k| <\varepsilon$ and $\nu(T_k,x_k)\geq v-\varepsilon$ for all $k$ sufficiently large, so we have:
\begin{equation*}
v - \varepsilon \leq \nu(T_k,x_k)\leq \nu(T_k,x_k,r)\leq \left(\frac{r+\varepsilon}{r}\right)^{2(n-1)}
\nu(T_k,x,r+\varepsilon).
\end{equation*}
First letting $k\rightarrow \infty$ and then letting $\e\rightarrow 0$ imply $v\leq \nu(T,x,r)$, and so we conclude by letting $r\rightarrow 0$.
\end{proof}

\section{Nakayama constant for $(1, 1)$-classes}\label{sec nak divisor}

In this section, we study some basic properties of the Nakayama constants for $(1, 1)$-classes, which will prove useful in the study of local positivity for $(n-1, n-1)$-classes.
Recall that, given $L\in \Eff^1 (X)$, the local Nakayama constant $n_x (L)$ is defined by
\begin{equation*}
  n_x (L) = \sup\{t\geq 0\,|\,\pi^* L - t E \in \Eff^1 (Y)\},
\end{equation*}
where $\pi: Y \rightarrow X$ is the blow-up of $X$ at $x$ and $E$ is the exceptional divisor.
Firstly, we note that $n_x (L)$ can be calculated by Lelong numbers.

\begin{prop} \label{prop nak lelong}
Let $L\in \Eff^1 (X)$ and $x\in X$. Then we have
\begin{equation*}
n_x(L) =  \sup_{0 \leq T \in L}\nu(T,x),
\end{equation*}
where $\nu(T, x)$ is the Lelong number at $x$, and the supremum is taken over all positive currents $T$ in $L$.
\end{prop}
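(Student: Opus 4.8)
The plan is to prove the two inequalities $n_x(L) \le \sup_{0 \le T \in L} \nu(T,x)$ and $n_x(L) \ge \sup_{0 \le T \in L} \nu(T,x)$ separately, using the dictionary between positive currents on $X$ and on the blow-up $\pi \colon Y \to X$. Throughout, write $E = \pi^{-1}(x)$ for the exceptional divisor; recall $\pi^* \colon H^{1,1}(X,\mathbb{R}) \to H^{1,1}(Y,\mathbb{R})$ is injective and that $\pi_*$ and $\pi^*$ interact with closed positive currents in the standard way (pushforward preserves positivity; pullback of a positive current through a modification is defined and positive away from the exceptional set, with a controlled mass contribution along $E$).

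For the inequality $n_x(L) \ge \nu(T,x)$ for a fixed closed positive current $T \in L$: set $t = \nu(T,x)$. The key point is that the pullback $\pi^* T$, suitably interpreted (e.g. taking the strict transform plus the Lelong number times the current of integration, or more robustly via Siu decomposition / the construction in Demailly's book), decomposes as $\pi^* T = T' + t\,[E]$ where $T' \ge 0$ is a closed positive current; passing to cohomology gives $\pi^* L - tE = \{T'\} \in \Eff^1(Y)$, hence $n_x(L) \ge t$. Taking the supremum over all such $T$ yields one direction. I would be slightly careful here about whether one needs $t = \nu(T,x)$ exactly or can only subtract slightly less; in the transcendental setting the cleanest route is to invoke the local structure of closed positive $(1,1)$-currents near a point together with the fact that the Lelong number is precisely the generic multiplicity of $T$ along $E$ after pullback, so that $\pi^*T - t[E]$ remains positive.

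For the reverse inequality: suppose $t < n_x(L)$, so $\pi^* L - tE$ is pseudo-effective, and pick a closed positive current $S \in \pi^* L - tE$ on $Y$. Push forward: $\pi_* S$ is a closed positive current on $X$ whose class is $\pi_*(\pi^* L - tE) = L$ (since $\pi_* E = 0$). It remains to check that $\nu(\pi_* S, x) \ge t$. This follows because $S + t[E] \in \pi^* L$ is positive, and the Lelong number of a pushforward at $x$ is bounded below by the generic Lelong number along $E$ of a positive current in the pullback class — concretely, $\nu(\pi_* S, x) \ge \nu(S + t[E], y)$ for a generic $y \in E$, and the $t[E]$ term contributes exactly $t$. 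Taking the supremum over $t < n_x(L)$ gives $\sup_T \nu(T,x) \ge n_x(L)$, and combining with the first part completes the proof; one should also note the supremum on the right is attained (or handle it as a supremum throughout), since by upper semicontinuity of $T \mapsto \nu(T,x)$ on the compact set of normalized positive currents in $L$ the sup is a max.

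The main obstacle I anticipate is the bookkeeping of Lelong numbers under $\pi^*$ and $\pi_*$ in the transcendental (non-necessarily-divisorial) setting: making precise that "pulling back a positive current and subtracting its Lelong number along $E$ leaves a positive current" and the dual statement for pushforward. This is standard for divisors but for general closed positive $(1,1)$-currents it should be extracted from the local theory in \cite{Dem_AGbook} (Siu decomposition, comparison of Lelong numbers under modifications), so the proof will mostly consist of citing those facts and checking the cohomology classes match up.
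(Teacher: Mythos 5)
Your proposal is correct and follows essentially the same route as the paper: one inequality via the Siu decomposition of $\pi^*T$, using that the generic Lelong number of $\pi^*T$ along $E$ equals $\nu(T,x)$, and the other by descending a positive current in $\pi^*L - tE$ back to a positive current in $L$ and comparing Lelong numbers. The only cosmetic difference is that the paper produces the descended current via the bijection between positive currents in $L$ and in $\pi^*L$ from \cite[Proposition 1.2.7]{Bou02} rather than as a pushforward $\pi_*S$, but these give the same current and the same Lelong-number estimate.
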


\begin{proof}
Let $\pi: Y \rightarrow X$ be the blow-up of $X$ at $x$ and let $E$ be the exceptional divisor. Then if $T\in L$ is a positive current, we have that
$\nu(\pi^*T, E) = \nu(T,x)$,
where $\nu(\pi^*T, E)$ is the generic Lelong number along $E$, defined as $\nu(\pi^*T, E) := \inf_{x\in E} \nu(\pi^*T, x)$.

By Siu's decomposition theorem \cite{siu1974analyticity}, there is a positive current $S$ so that
$\pi^*T = S + \nu(T,x)[E]$. Thus, the class $\pi^*L - \nu(T, x)[E]$ is pseudo-effective, implying
\begin{equation*}
n_x(L) \geq  \sup_{0 \leq T \in L}\nu(T,x).
\end{equation*}

For the other direction, suppose that $\pi^*L-t[E]$ is pseudo-effective. Then it contains a closed positive current $S$, and $S + t[E]$ is then a closed positive current in $\pi^*L$. By \cite[Proposition 1.2.7]{Bou02}, there is a unique positive current $T\in L$ such that $\pi^*T = S + t[E]$. In particular, this implies
\begin{equation*}
n_x(L) \leq  \sup_{0 \leq T \in L}\nu(T,x).
\end{equation*}
This finishes the proof of the equality.
\end{proof}

\begin{rmk}\label{rmk nak def}
If $L$ is a big line bundle, then
\begin{equation*}
n_x (L) = \sup\{\mult_x (D)| D\in L\},
\end{equation*}
where $D$ is taken over all rational effective divisors in $L$. This is a direct consequence of \cite[Corollary 14.23]{demaillyAGbook}.
\end{rmk}

By the definition of Lelong numbers, $n_x (L)$ is always bounded above.

\begin{prop}\label{prop nak upper}
Let $A$ be fixed a K\"ahler class on $X$. Then there is a constant $c>0$ (independent of $x$) such that for all $L\in\Eff^1(X)$:
\begin{equation*}
n_x (L) \leq c (L\cdot A^{n-1}).
\end{equation*}
\end{prop}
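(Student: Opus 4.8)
The plan is to deduce this from Proposition~\ref{prop nak lelong} together with the monotonicity of the normalized masses $\nu(T,x,r)$ and a compactness argument on $X$. By Proposition~\ref{prop nak lelong}, $n_x(L)=\sup\{\nu(T,x)\,:\,0\le T\in L\}$, so it suffices to produce a constant $c>0$ depending only on $X$ and $A$ such that $\nu(T,x)\le c\,(L\cdot A^{n-1})$ for every $x\in X$ and every closed positive $(1,1)$-current $T$ in every class $L\in\Eff^1(X)$; here we will use that $L\cdot A^{n-1}=\int_X T\wedge\omega_A^{n-1}$ for any fixed Kähler form $\omega_A\in A$, independently of the choice of $T$ and $\omega_A$.

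The first step is a local comparison that is uniform in $x$. I would fix $\omega_A\in A$, cover the compact manifold $X$ by finitely many coordinate charts $(U_1,z_1),\dots,(U_N,z_N)$, and choose relatively compact open sets $V_j\Subset U_j$ that still cover $X$. By compactness one can then pick a radius $r_0>0$ such that, for every $x\in X$, if $x\in V_j$ then the coordinate ball $B_{r_0}(x)=\{|z_j-z_j(x)|<r_0\}$ is relatively compact in $U_j$ and is contained in a fixed compact set $K_j\Subset U_j$ that does not depend on $x$. On each $K_j$ the smooth positive $(1,1)$-forms $dd^c|z_j|^2$ and $\omega_A$ are comparable, so there is $C_j>0$ with $dd^c|z_j|^2\le C_j\,\omega_A$ pointwise on $K_j$; set $C=\max_jC_j$. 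Expanding shows $0\le(dd^c|z_j|^2)^{n-1}\le C^{n-1}\,\omega_A^{n-1}$ as positive $(n-1,n-1)$-forms on $K_j$, and since $T\ge 0$, wedging with $T$ gives $T\wedge(dd^c|z_j|^2)^{n-1}\le C^{n-1}\,T\wedge\omega_A^{n-1}$ as positive measures on $K_j$.

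The second step is the mass estimate. Given $x\in X$, choose $j$ with $x\in V_j$. Since the $\nu(T,x,r)$ decrease to $\nu(T,x)$ as $r\to0$, and using the inequality above on $B_{r_0}(x)\subset K_j$, one obtains
\begin{align*}
\nu(T,x)\ \le\ \nu(T,x,r_0)
&=\ \frac{1}{r_0^{2(n-1)}}\int_{B_{r_0}(x)}T\wedge(dd^c|z_j|^2)^{n-1}\\
&\le\ \frac{C^{n-1}}{r_0^{2(n-1)}}\int_{B_{r_0}(x)}T\wedge\omega_A^{n-1}
\ \le\ \frac{C^{n-1}}{r_0^{2(n-1)}}\int_X T\wedge\omega_A^{n-1}.
\end{align*}
Taking the supremum over positive $T\in L$ and setting $c=C^{n-1}/r_0^{2(n-1)}$ --- which depends only on the charts, $\omega_A$, and $r_0$, not on $x$ or $L$ --- then gives $n_x(L)\le c\,(L\cdot A^{n-1})$.

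I do not expect a serious obstacle: the only point requiring genuine care is that the comparison constant $C$ and the radius $r_0$ must be chosen independently of $x\in X$, which is precisely what passing to a finite subcover and choosing $r_0$ uniformly achieves. The auxiliary fact that $0\le\beta\le\gamma$ for smooth positive $(1,1)$-forms implies $T\wedge\beta^{n-1}\le T\wedge\gamma^{n-1}$ for a positive current $T$ is standard: one writes $\gamma^{n-1}-\beta^{n-1}$ as a sum of wedge products of the positive $(1,1)$-forms $\beta$, $\gamma$, and $\gamma-\beta$, and recalls that pairing a positive current with a strongly positive form yields a positive measure (see \cite[Chapter 3]{Dem_AGbook}).
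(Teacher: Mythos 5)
Your argument is correct and is precisely the standard Lelong-number mass estimate (cf.\ \cite[Lemma 2.5]{Bou02}) that the paper invokes when it says the proposition ``follows directly from the definition of Lelong numbers''; the paper simply omits the details you have written out. No gaps: the uniform choice of $r_0$ and the comparison constant via a finite cover, together with Proposition~\ref{prop nak lelong} and the monotonicity of $\nu(T,x,r)$, is exactly the intended route.
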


\begin{proof}
This follows directly from the definition of Lelong numbers.
\end{proof}

\begin{exmp}
Suppose $E$ is a rigid effective divisor, in the sense that its cohomology class has only one positive current, namely the integration current $[E]$. Then $n_x(E) = \mathrm{mult}_x(E)$. This shows that $n_x(L)$ need not be zero for $L\in\partial \Eff^1 (X)$.
\end{exmp}

\begin{exmp}
Suppose that $L$ is a semi-ample line bundle. Consider the Iitaka fibration for $L$, $\varphi_L: X\rightarrow Y$, and suppose further that $Y$ is smooth. Then $L = \varphi_L^*A$, for some K\"ahler class $A$ on $Y$. It is well known that for any $y \in Y$ there exists a positive current $T_y\in A$ with an isolated analytic singularity at $y$, hence $\nu(T_y, y) > 0$. Then by the result on pull-backs mentioned in Proposition \ref{prop nak lelong}, we have that
\begin{equation*}
\nu(\varphi_L ^*T_y, \varphi_L ^{-1}(y)) > 0.
\end{equation*}
In particular, $n_x(L) > 0$ for all $x\in \varphi_L ^{-1}(y)$. Since $\varphi_L$ is surjective, we have that $n_x(L) > 0$ for all $x\in X$. 
\end{exmp}

Let $L = P(L) + [N(L)]$ be the divisorial Zariski decomposition of $L$ (see e.g \cite{Bou04}). Then for any positive current $T\in L$, we have
\begin{equation*}
T = S + N(L)
\end{equation*}
for some positive current $S \in P(L)$. In particular,
\begin{equation*}
n_x(L) = n_x(P(L)) + n_x([N(L)]),
\end{equation*}
which shows that $n_x$ is not strictly log-concave in $\Eff^1 (X)^\circ$.
This also shows that $n_x(L) = n_x(P(L))$ for all $x\in\mathrm{Amp}(L)$.

Next we show that $n_x (L)$ is achieved by some positive current $T_x\in L$ -- it is a direct consequence of  Lemma \ref{lem lelong semicontinuous}.

\begin{prop} \label{prop maximal lelong}
Let $L \in \Eff^1 (X)$. For any $x \in X$, there is a positive current $T_x \in L$ such that
\begin{equation*}
  n_x (L) = \nu(T_x, x).
\end{equation*}
\end{prop}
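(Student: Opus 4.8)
The plan is to use Proposition \ref{prop nak lelong}, which identifies $n_x(L)$ with the supremum of Lelong numbers $\nu(T,x)$ over all positive currents $T \in L$, together with the upper semicontinuity of Lelong numbers in the current variable (Lemma \ref{lem lelong semicontinuous}, or the stronger Lemma \ref{lem lelong semicontinuous two}) and a compactness argument for the space of positive currents in a fixed cohomology class. The key point is that the mass of any positive current $T \in L$ with respect to a fixed K\"ahler class $A$ is determined by the cohomology class $L$ alone, namely $\int_X T \wedge A^{n-1} = L \cdot A^{n-1}$, so the set $\{T \geq 0 : T \in L\}$ is a bounded subset of the space of currents and hence weakly compact by Banach--Alaoglu.

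First I would take a sequence $T_k \in L$ of positive currents with $\nu(T_k, x) \to n_x(L)$, which exists by Proposition \ref{prop nak lelong}. Since all the $T_k$ have the same mass $L \cdot A^{n-1}$ with respect to a fixed K\"ahler class $A$, after passing to a subsequence we may assume $T_k \to T_x$ weakly for some current $T_x$; weak limits of closed positive currents are closed and positive, and since cohomology classes are continuous under weak convergence, $T_x \in L$. Then applying the upper semicontinuity of $T \mapsto \nu(T, x)$ from Lemma \ref{lem lelong semicontinuous} (or simply Lemma \ref{lem lelong semicontinuous two} with $x_k \equiv x$) gives
\begin{equation*}
\nu(T_x, x) \geq \limsup_{k \to \infty} \nu(T_k, x) = n_x(L).
\end{equation*}
On the other hand $\nu(T_x, x) \leq n_x(L)$ by Proposition \ref{prop nak lelong}, so equality holds and $T_x$ is the desired current.

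I do not expect any serious obstacle here; the statement is essentially a packaging of the two ingredients (Proposition \ref{prop nak lelong} and the semicontinuity lemma) via a standard weak-compactness argument, which is presumably why the authors call it ``a direct consequence of Lemma \ref{lem lelong semicontinuous}.'' The only point requiring minor care is confirming that the weak limit $T_x$ indeed lies in the class $L$ rather than in some class differing by, e.g., a boundary term — but this is immediate since for closed positive currents weak convergence implies convergence of cohomology classes in $H^{1,1}(X,\mathbb{R})$.
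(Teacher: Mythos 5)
Your proposal is correct and follows essentially the same route as the paper's proof: extract a maximizing sequence via Proposition \ref{prop nak lelong}, use the fixed mass $L\cdot A^{n-1}$ to get weak compactness, and apply the upper semicontinuity of $T\mapsto\nu(T,x)$ to the weak limit. The only difference is that you spell out the verification that the limit stays in the class $L$, which the paper takes for granted.
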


\begin{proof}
Take a sequence of positive currents $T_k \in L$ such that
$\lim_{k\rightarrow \infty} \nu(T_k, x) = n_x (L)$.
Since the currents $T_k$ are in the same class, their masses are bounded above by a fixed constant, implying the sequence is compact in the weak topology of currents. Thus, after taking a subsequence and relabeling, we can assume that $T_{k}$ converges to a current $ T_x \in L$. By Lemma
\ref{lem lelong semicontinuous} and the definition of $n_x (L)$ we immediately see
$n_x (L) = \nu(T_x, x)$.

\end{proof}

\begin{cor}\label{cor semicont zeta}
The function $x\mapsto n_x (L)$ is upper semicontinuous.
\end{cor}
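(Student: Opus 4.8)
The plan is to deduce this directly from Proposition \ref{prop maximal lelong} (existence of a maximizing current) together with the joint upper semicontinuity of Lelong numbers from Lemma \ref{lem lelong semicontinuous two}. Fix $L \in \Eff^1(X)$ and a point $x \in X$, and let $x_k \to x$ be an arbitrary sequence; the goal is to show $\limsup_{k \to \infty} n_{x_k}(L) \le n_x(L)$.

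First I would pass to a subsequence so that the $\limsup$ becomes an honest limit, i.e.\ $n_{x_k}(L) \to v := \limsup_{k} n_{x_k}(L)$. By Proposition \ref{prop maximal lelong}, for each $k$ there is a positive current $T_k \in L$ with $n_{x_k}(L) = \nu(T_k, x_k)$. Since all $T_k$ lie in the fixed class $L$, their masses (measured against a fixed K\"ahler class, as in Proposition \ref{prop nak upper}) are uniformly bounded, so $\{T_k\}$ is precompact in the weak topology of currents; after a further subsequence and relabelling, $T_k$ converges weakly to a positive current $T \in L$.

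Now I would apply Lemma \ref{lem lelong semicontinuous two} with this data ($x_k \to x$ and $T_k \to T$ weakly) to get
\[
\nu(T, x) \ge \limsup_{k \to \infty} \nu(T_k, x_k) = \lim_{k \to \infty} n_{x_k}(L) = v .
\]
On the other hand, $T \in L$ is a positive current, so by Proposition \ref{prop nak lelong} (equivalently, just by the definition of $n_x(L)$ as a supremum of Lelong numbers over positive currents in $L$) we have $\nu(T, x) \le n_x(L)$. Combining the two inequalities yields $v \le n_x(L)$, which is exactly upper semicontinuity at $x$.

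I do not expect a genuine obstacle here; the only points requiring a little care are the order of the two subsequence extractions — thin out first so that the $\limsup$ is realized as a limit, then thin out again to extract a weak limit of the maximizing currents $T_k$ — and the uniform mass bound on the $T_k$ that makes the second extraction legitimate. Both are standard, so the corollary falls out immediately once Proposition \ref{prop maximal lelong} and Lemma \ref{lem lelong semicontinuous two} are in hand.
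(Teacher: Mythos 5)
Your argument is correct and is precisely the one the paper intends: its proof of this corollary simply cites Proposition \ref{prop maximal lelong} and Lemma \ref{lem lelong semicontinuous two}, and your write-up fills in exactly the standard details (realizing the $\limsup$ as a limit, extracting a weak limit of the maximizing currents via the uniform mass bound, then applying the joint semicontinuity of Lelong numbers). Nothing further is needed.
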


\begin{proof}
This is now immediate by Proposition \ref{prop maximal lelong} and Lemma \ref{lem lelong semicontinuous two}.
\end{proof}

\begin{rmk}\label{rmk semicont n_x}
In \cite{fulger2017seshadri}, Fulger shows that if $\alpha$ is a movable curve class, then $x\mapsto S_x(\alpha)$ is lower semicontinuous in the countable Zariski topology. Using this and the definition of $S_x$ as the polar transform of $n_x$, one easily sees that if $L$ is a divisor class, then $x\mapsto n_x(L)$ is actually upper semicontinuous in this same topology, an improvement over Cor. \ref{cor semicont zeta} in this case.
\end{rmk}

\begin{prop}\label{prop lowerbound}
We have that $n_x(L)\geq\vol(L)^{1/n}$ for all pseudo-effective $(1,1)$-classes $L$.
\end{prop}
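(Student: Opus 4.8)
The plan is to produce, for each big class $L$ and each point $x$, a positive current $T \in L$ whose Lelong number at $x$ is at least $\vol(L)^{1/n}$; by Proposition \ref{prop nak lelong} this yields $n_x(L) \geq \vol(L)^{1/n}$, and then the general pseudo-effective case follows by a limiting argument. The construction of such a current is precisely the content of Demailly's mass concentration technique from \cite{Dem93}: given a big class $L$ and $\varepsilon > 0$, one can build a K\"ahler current (or closed positive current) in $L$ (or in $L + \varepsilon A$ for an auxiliary K\"ahler class $A$) with a logarithmic pole at $x$ of Lelong number arbitrarily close to $\vol(L)^{1/n}$. Concretely, on the blow-up $\pi \colon Y \to X$ at $x$ with exceptional divisor $E$, the class $\pi^*L - tE$ has top self-intersection $L^n - t^n$ (this is where the exponent $1/n$ enters), so for $t < \vol(L)^{1/n}$ this class has positive volume on $Y$; one then uses the characterization of big classes as those of positive volume, together with the fact that $E$ is not contained in the non-ample locus of a suitable perturbation, to conclude that $\pi^*L - tE$ is big, hence pseudo-effective. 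Taking $t \to \vol(L)^{1/n}$ and using the definition of $n_x(L)$ as a supremum gives the bound in the big case.

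Let me be a little more careful about the cohomological volume computation, since that is the actual engine. On $Y$, writing $H^{1,1}(Y,\mathbb{R}) = \pi^* H^{1,1}(X,\mathbb{R}) \oplus \mathbb{R}[E]$, one has $(\pi^*L)^j \cdot E^{n-j} = 0$ for $j < n$ except via the top exceptional term $(-E)^n = 1$ (with sign conventions as in the paper, $E^n = (-1)^{n-1}$, so $(-E)^n = 1$). Hence $(\pi^*L - tE)^n = L^n - t^n$. The continuity and homogeneity of the volume function, together with the fact that on a smooth projective variety a class with positive top self-intersection number which is a small perturbation of a big class is itself big, then shows $\pi^*L - tE$ is big for every $0 \le t < \vol(L)^{1/n}$. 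A cleaner route, avoiding any delicate bigness criterion, is to appeal directly to Demailly's regularization/mass concentration result as cited: for any $\delta>0$ there is $T \in L$ (or $T \in L + \delta A$) with $\nu(T,x) \ge \vol(L)^{1/n} - \delta$; I would phrase the argument this way to keep it short, and then remove the $\delta A$ by letting $\delta \to 0$, invoking weak compactness of normalized positive currents together with Lemma \ref{lem lelong semicontinuous two} to pass the Lelong number bound to a limit current in $L$.

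For the general pseudo-effective (non-big) case, approximate $L$ by $L_\delta := L + \delta A$ for a fixed K\"ahler class $A$ and $\delta > 0$; each $L_\delta$ is big, so $n_x(L_\delta) \ge \vol(L_\delta)^{1/n}$. By Proposition \ref{prop maximal lelong} choose $T_\delta \in L_\delta$ with $\nu(T_\delta, x) = n_x(L_\delta)$; these currents have uniformly bounded mass as $\delta \to 0$, so a subsequence converges weakly to some $T \in L$, and Lemma \ref{lem lelong semicontinuous two} gives $\nu(T,x) \ge \limsup_{\delta \to 0} n_x(L_\delta) \ge \limsup_{\delta \to 0}\vol(L_\delta)^{1/n} = \vol(L)^{1/n}$, using continuity of the volume function on $\Eff^1(X)$. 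Since $T \in L$ is a positive current, Proposition \ref{prop nak lelong} gives $n_x(L) \ge \nu(T,x) \ge \vol(L)^{1/n}$, as desired. The main obstacle is the first step: producing the concentrated current with the sharp exponent $1/n$. Everything downstream is soft functional analysis (weak compactness plus semicontinuity of Lelong numbers), but the mass concentration itself is genuinely the nontrivial analytic input, and I would lean on \cite{Dem93} rather than reprove it.
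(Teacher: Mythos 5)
Your overall strategy is the same as the paper's: feed Demailly's mass concentration into Proposition \ref{prop nak lelong} and then pass to limits using weak compactness and semicontinuity of Lelong numbers (Lemma \ref{lem lelong semicontinuous two}). But there is a genuine gap in the first step, which you yourself identify as the only nontrivial one. The construction in \cite{Dem93} is carried out for \emph{nef} (and big) classes: one solves Monge--Amp\`ere equations for a family of genuine K\"ahler metrics and concentrates their mass at $x$. For a big class $L$ that is not nef this does not apply directly; one must invoke either the Monge--Amp\`ere theory in big classes of \cite{BEGZ10MAbig} or Fujita approximation, and in either form the construction only produces the concentrated current at points $x \in \Amp(L)$. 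Your perturbation $L_\delta = L + \delta A$ does not repair this: $L_\delta$ is big but still need not be nef, and $E_{nK}(L_\delta)$ can contain a fixed divisor for all small $\delta$ (e.g.\ when the negative part $N(L)$ of the divisorial Zariski decomposition is nonzero), so for $x$ in that divisor none of your currents $T_\delta$ is actually supplied by the cited result. The paper closes exactly this case by a second limiting argument in the \emph{point} variable: for $x \in X \setminus \Amp(L)$ it chooses $x_k \in \Amp(L)$ with $x_k \to x$ and uses the upper semicontinuity of $x \mapsto n_x(L)$ (Corollary \ref{cor semicont zeta}, which is where the moving-point version of Lemma \ref{lem lelong semicontinuous two} is really needed). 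Your write-up only ever takes limits in the class/current variable, so this step is missing.

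A secondary, smaller issue: the cohomological shortcut ``$(\pi^*L - tE)^n = L^n - t^n > 0$, and a perturbation of a big class with positive top self-intersection is big'' is not a valid general principle. Positivity of the top self-intersection number does not imply bigness for non-nef classes, and $\pi^*L - tE$ is not nef for $t>0$ in general; the assertion that $\pi^*L - tE$ is big for all $t^n < \vol(L)$ (with $L$ nef and big) is precisely the \emph{output} of the mass concentration argument, not a soft consequence of continuity of the volume. Since you ultimately defer to \cite{Dem93} anyway, this does not by itself break the proof, but the sentence suggests an easier route that does not exist. Finally, note that for $L$ pseudo-effective but not big the statement is vacuous since $\vol(L)=0$; your $\delta$-perturbation handles this correctly, if somewhat indirectly, whereas the paper simply restricts attention to big classes.
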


\begin{proof}
This is a direct consequence of Corollary \ref{cor semicont zeta} and a result
in \cite[Section 6]{Dem93}.

First assume that $L$ is big and nef. In \cite{Dem93}, by solving a family of complex Monge-Amp\`{e}re equations, for any $x\in X$ and positive number $\tau$ satisfying $\tau^n < L^n$, there is a positive current $T\in L$ such that $\nu(T, x)\geq \tau$. In particular, taking a limit of such currents as $\tau\rightarrow \vol(L)^{1/n}$ gives a positive current in $L$ with Lelong number at $x$ at least $\vol(L)^{1/n}$. This shows that $n_x (L) \geq \vol(L)^{1/n}$.

In the case when $L$ is big, we first show that for any $x\in \Amp(L)$, $n_x (L) \geq \vol(L)^{1/n}$. There are two slightly different ways to achieve this desired lower bound. The first method is to apply the result of \cite{BEGZ10MAbig} and the same argument of \cite{Dem93}. The second method is to apply Fujita approximation for $L$; then for any $x \in \Amp(\alpha)$, we reduce to the K\"ahler case upstairs.
Thus for any $x \in \Amp(L)$, we see that $n_x (L) \geq \vol(L)^{1/n}$.

For any point $x \in X\setminus \Amp(L)$, take a sequence of points $x_k \in \Amp(L)$ limiting to $x$ -- applying Corollary \ref{cor semicont zeta} and taking the limit of $n_{x_k} (L)$ as $x_k$ tends to $x$ gives the lower bound.

\end{proof}

\section{Local positivity for $(n-1, n-1)$-classes}\label{sec local pos}
In this section, we give the proof of the main results and discuss various generalizations of the local positivity invariants for $(n-1, n-1)$-classes.

\subsection{Nakayama constants for pseudo-effective $(n-1, n-1)$-classes}

\subsubsection{Proof of Theorem \ref{thrm nak curve}}
Recall that we need to prove the following result:
\begin{thm}[Theorem \ref{thrm nak curve}] 
For any $\alpha \in \Eff_1 (X)$, its local/global Nakayama constant satisfies:
\begin{itemize}
  \item $N_x(\alpha)\geq \widehat{\vol}(\alpha)^{n-1/n}$.
  \item $N_x (\alpha)$ has the following geometric characterization:
\begin{equation*}
  N_x (\alpha) = \sup\{t\geq 0 | \pi^* \alpha + t e\ \textrm{is pseudo-effective}\},
\end{equation*}
where $\pi: Y \rightarrow X$ is the blow-up of $X$ at $x$ and $e := (-E)^{n-1}$.
\end{itemize}
\end{thm}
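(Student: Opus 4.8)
I will deduce both items directly from the defining formula $N_x(\alpha)=\inf_{L\in\Nef^1(X)^\circ}\frac{\alpha\cdot L}{s_x(L)}$, together with the order-reversing property of the polar transform $\mathcal H$ and cone duality on the blow-up $\pi:Y\to X$ at $x$. Throughout I use the orthogonal decomposition $H^{1,1}(Y,\mathbb R)=\pi^*H^{1,1}(X,\mathbb R)\oplus\mathbb R[E]$ and the intersection numbers on $Y$ (with $e=(-E)^{n-1}$): $\pi^*\alpha\cdot\pi^*L=\alpha\cdot L$, $\pi^*\alpha\cdot E=0$, $\pi^*L\cdot e=0$ and $E\cdot e=1$, so that $(\pi^*\alpha+te)\cdot(\pi^*L-aE)=\alpha\cdot L-ta$.

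\textbf{First item.} This is formal: since $s_x(L)\le\vol(L)^{1/n}$ on $\Nef^1(X)$ and $\alpha\cdot L\ge 0$ for $\alpha\in\Eff_1(X)$, $L\in\Nef^1(X)$, we have $\frac{\alpha\cdot L}{s_x(L)}\ge\frac{\alpha\cdot L}{\vol(L)^{1/n}}$ for every $L\in\Nef^1(X)^\circ$; taking the infimum gives $N_x(\alpha)\ge\widehat{\vol}(\alpha)^{(n-1)/n}$. Equivalently, $s_x\le\vol^{1/n}$ and $\mathcal H$ order-reversing give $N_x=\mathcal H s_x\ge\mathcal H(\vol^{1/n})=\widehat{\vol}^{(n-1)/n}$. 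The global inequality follows because $\widehat{\vol}(\alpha)$ does not depend on $x$.

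\textbf{Second item.} The key structural fact is that every $M\in\Nef^1(Y)$ has the form $M=\pi^*L-aE$ with $L=\pi_*M\in\Nef^1(X)$ and $0\le a\le s_x(L)$: nefness of $\pi_*M$ follows from the projection formula and the fact that $\pi^*$ sends pseudo-effective curve classes to pseudo-effective curve classes; $a\ge 0$ follows by restricting $M$ to a line in $E\cong\mathbb P^{n-1}$; and $a\le s_x(L)$ holds because $M=\pi^*L-aE$ is nef while $\{t\ge 0:\pi^*L-tE\text{ is nef}\}$ is the closed interval $[0,s_x(L)]$ (convexity and closedness of $\Nef^1(Y)$, and finiteness of $s_x(L)$ from the bound $s_x\le\vol^{1/n}$). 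Granting this, I prove the two inequalities. For ``$\le$'': if $\pi^*\alpha+te\in\Eff_1(Y)=\Nef^1(Y)^*$, then pairing against the nef class $\pi^*L-s_x(L)E$ for $L\in\Nef^1(X)^\circ$ gives $\alpha\cdot L-t\,s_x(L)\ge 0$, hence $t\le\frac{\alpha\cdot L}{s_x(L)}$, and so $t\le N_x(\alpha)$. For ``$\ge$'': fix $0\le t\le N_x(\alpha)$; for any nef $M=\pi^*L-aE$ as above, if $s_x(L)=0$ then $a=0$ and $(\pi^*\alpha+te)\cdot M=\alpha\cdot L\ge 0$, while if $s_x(L)>0$ then $(\pi^*\alpha+te)\cdot M=\alpha\cdot L-ta\ge\alpha\cdot L-t\,s_x(L)\ge 0$, using $a\le s_x(L)$, $t\ge 0$, and $t\le N_x(\alpha)\le\frac{\alpha\cdot L}{s_x(L)}$. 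For the last step I need the infimum defining $N_x(\alpha)$ to remain the same when taken over all nef $L$ with $s_x(L)>0$, not just interior ones; this follows by approximating a boundary class $L$ by $L_\varepsilon=L+\varepsilon A$ ($A$ K\"ahler), using $\alpha\cdot L_\varepsilon\to\alpha\cdot L$, upper semicontinuity of $s_x$, and $1$-concavity $s_x(L_\varepsilon)\ge s_x(L)$ to conclude $s_x(L_\varepsilon)\to s_x(L)$. By cone duality $\Nef^1(Y)^*=\Eff_1(Y)$ this yields $\pi^*\alpha+te\in\Eff_1(Y)$, and combining the two inequalities gives the stated characterization (with the supremum attained).

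\textbf{Main obstacle.} The crux is the structural description of $\Nef^1(Y)$ for the point blow-up—namely that its classes are exactly $\pi^*L-aE$ with $L$ nef and $0\le a\le s_x(L)$—together with the boundary-approximation lemma that lets one extend the infimum off the interior of $\Nef^1(X)$. Everything else is projection-formula bookkeeping on $Y$ and the duality $\Nef^1(Y)^*=\Eff_1(Y)$.
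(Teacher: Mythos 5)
Your argument follows the paper's proof essentially step for step: the first item is the same formal consequence of $s_x \le \vol^{1/n}$ and the definition of $\widehat{\vol}$ as a polar transform, and the second item is the same computation using the decomposition $M = \pi^*L - aE$ of nef classes on $Y$ together with the duality $\Nef^1(Y)^* = \Eff_1(Y)$. Two remarks. First, the one non-formal input is the claim that $\pi_*M = L \in \Nef^1(X)$; you justify it by asserting that $\pi^*$ sends pseudo-effective $(n-1,n-1)$-classes to pseudo-effective classes, but in the transcendental setting of the paper that assertion is itself \emph{deduced} from $\pi_*(\Nef^1(Y)) \subseteq \Nef^1(X)$ (the paper's route: the class of a K\"ahler current with an isolated singularity is K\"ahler, hence $\pi_*$ preserves nefness, hence by cone duality $\pi^*$ preserves pseudo-effectivity of $(n-1,n-1)$-classes). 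As written your justification is therefore circular unless you restrict to genuine curve classes, where $\pi^*$ of an effective curve is visibly effective; you should either cite the transcendental statement or reproduce the isolated-singularity argument. Second, your handling of the interior-versus-boundary issue in the infimum (via $L_\varepsilon = L + \varepsilon A$, upper semicontinuity and $1$-concavity of $s_x$), and your explicit treatment of the case $s_x(L)=0$, address a point the paper passes over silently and are a genuine improvement in rigor.
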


\begin{proof}
Recall that, given $\alpha \in \Eff_1 (X)$, its local Nakayama constant $N_x (\alpha)$ at $x$ is defined by
\begin{equation*}
  N_x (\alpha) = \inf_{L \in \Nef^1 (X)^\circ} \left(\frac{\alpha\cdot L}{s_x (L)}\right).
\end{equation*}
Since $s_x (L) \leq \vol(L)^{1/n}$, we get
\begin{equation*}
  N_x (\alpha) \geq \inf_{L \in \Nef^1 (X)^\circ} \left(\frac{\alpha\cdot L}{\vol(L)^{1/n}}\right) \geq \widehat{\vol}(\alpha)^{n-1/n}.
\end{equation*}
In particular, we get $N(\alpha) \geq \widehat{\vol}(\alpha)^{n-1/n}$ as desired.

It remains to give the geometric characterization of $N_x (\alpha)$. We need to verify that
\begin{equation*}
  N_x (\alpha) = \sup\{t\geq 0 | \pi^* \alpha + te \in \Eff_1 (Y)\},
\end{equation*}
where $\pi: Y \rightarrow X$ is the blow-up of $X$ at $x$ and $e = (-E)^{n-1}$.

As $\pi$ is the blow-up at one point, it is easy to see that $\pi^*\alpha\in\Eff_1(Y)$ whenever $\alpha\in\Eff_1(X)$ -- in general, this is not true for blow-up along a higher dimensional subvariety. To see this, we just need to note that the class of a K\"ahler current with isolated singularities must be a K\"ahler class. In particular, for any nef class $L' \in \Nef^1 (Y)$, we have $\pi_* L' \in \Nef^1 (X)$, thus
\begin{equation*}
  \pi^* \alpha \cdot L' = \alpha \cdot \pi_* L' \geq 0
\end{equation*}
whenever $\alpha \in \Eff_1 (X)$. Then by the cone duality $\Nef^1 (Y)^* = \Eff_1 (Y)$, we have $\pi^*\alpha\in\Eff_1(Y)$.

Suppose then that $L' \in \Nef^1 (Y)$. Since $Y$ is the blow up of $X$ at a point,
\begin{equation*}
  L' = \pi^* L - s E
\end{equation*}
for some $L\in H^{1,1}(X, \mathbb{R}), s\in \mathbb{R}$. It is not hard to see that $L = \pi_* L' \in \Nef^1 (X)$ and $s\geq 0$. Now, by the duality $\Nef^1 (Y) ^* = \Eff_1 (Y)$, $\pi^* \alpha + te \in \Eff_1 (Y)$ if and only if
\begin{equation*}
  (\pi^* \alpha + te) \cdot (\pi^* L -s E) \geq 0,
\end{equation*}
which is equivalent to the inequality
\begin{equation*}
  t \leq \frac{\alpha\cdot L}{s}
\end{equation*}
for all $L$ nef and all $s\geq 0$ satisfying $\pi^* L - s E \in \Nef^1 (Y)$. Thus $\pi^* \alpha + te \in \Eff_1 (Y)$ if and only if
\begin{equation*}
  t \leq \frac{\alpha\cdot L}{s_x (L)}
\end{equation*}
for all $L$ nef, implying that
\begin{equation*}
N_x (\alpha) = \sup\{t\geq 0 | \pi^* \alpha +te \in \Eff_1 (Y)\}.
\end{equation*}
This finishes the proof of Theorem \ref{thrm nak curve}.

\end{proof}

\begin{rmk}
It is easy to see that Theorem \ref{thrm nak curve} also holds over non-projective K\"ahler manifolds.
\end{rmk}

\begin{rmk}
By the lower semicontinuity of the Seshadri function $x\mapsto s_x (L)$ (see \cite{LazarPositivity-1}), it is clear that the function $x\mapsto N_x (\alpha)$ is upper semicontinuous when $\alpha$ is a curve class and $X$ is endowed with the countable Zariski topology.
\end{rmk}

\subsection{Seshadri constants for movable $(n-1, n-1)$-classes}

\subsubsection{Proof of Theorem \ref{thrm sesha curve}} Recall that we need to prove the following result:

\begin{thm}[Theorem \ref{thrm sesha curve}]
The local/global Seshadri constant has the following properties:
\begin{itemize}
  \item For any $\alpha \in \Mov_1 (X)$, $S_x(\alpha) \leq \mathfrak{M}(\alpha)^{n-1/n}$.
  \item $S_x$ has the following geometric characterization:
      \begin{equation*}
       S_x (\alpha) = \sup\{t\geq 0 | \pi^* \alpha + t e\ \textrm{is movable}\},
      \end{equation*}
      where $\pi: Y \rightarrow X$ is the blow-up of $X$ at $x$ and $e = (-E)^{n-1}$.
   \item Suppose that $\alpha\in\Mov_1(X)$. Then $S(\alpha)>0$ if and only if $\alpha \in \Mov_1 (X)^\circ$.
\end{itemize}
\end{thm}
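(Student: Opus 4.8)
The plan is to handle the three items separately. The first two run parallel to the proof of Theorem~\ref{thrm nak curve}, under the dictionary $\Nef^1\leftrightarrow\Eff^1$, $\Eff_1\leftrightarrow\Mov_1$, $s_x\leftrightarrow n_x$, with the cone duality $\Eff^1(X)^*=\Mov_1(X)$ (and $\Eff^1(Y)^*=\Mov_1(Y)$) in place of $\Nef^1(X)^*=\Eff_1(X)$. For the upper bound I would combine $S_x(\alpha)=\inf_{L\in\Eff^1(X)^\circ}(\alpha\cdot L)/n_x(L)$ with Proposition~\ref{prop lowerbound}, i.e. $n_x(L)\ge\vol(L)^{1/n}$: then $(\alpha\cdot L)/n_x(L)\le(\alpha\cdot L)/\vol(L)^{1/n}$, and taking the infimum over big $L$ gives $S_x(\alpha)\le\mathfrak{M}(\alpha)^{(n-1)/n}$, hence also $S(\alpha)\le\mathfrak{M}(\alpha)^{(n-1)/n}$. (The inequality is reversed relative to Theorem~\ref{thrm nak curve} precisely because the volume bound for $n_x$ points the opposite way to the one for $s_x$.)

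For the geometric characterization, fix the blow-up $\pi\colon Y\to X$ at $x$, with exceptional divisor $E$ and $e=(-E)^{n-1}$. I would first record the elementary intersection numbers on $Y$: $\pi^*\alpha\cdot E=0$, $e\cdot\pi^*L=0$ for any $(1,1)$-class $L$ on $X$, and $e\cdot E=-(-E)^n=1$. Next, using $H^{1,1}(Y,\R)=\pi^*H^{1,1}(X,\R)\oplus\R E$ together with the fact that the pushforward of a closed positive current is a closed positive current, every $D'\in\Eff^1(Y)$ can be written $D'=\pi^*D-aE$ with $D=\pi_*D'\in\Eff^1(X)$ and $a\le n_x(D)$, and conversely every such class lies in $\Eff^1(Y)$. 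Since then $\pi^*\alpha\cdot D'=\alpha\cdot D\ge0$ for $\alpha\in\Mov_1(X)$, cone duality gives $\pi^*\alpha\in\Mov_1(Y)$; and for $t\ge0$, $\pi^*\alpha+te\in\Mov_1(Y)$ iff $(\pi^*\alpha+te)\cdot(\pi^*D-aE)=\alpha\cdot D-at\ge0$ for all admissible pairs $(D,a)$. This is automatic when $a\le0$, and for $a>0$ it says $t\le(\alpha\cdot D)/a$, whose binding case over $0<a\le n_x(D)$ is $t\le(\alpha\cdot D)/n_x(D)$; taking the infimum over $D\in\Eff^1(X)$ with $n_x(D)>0$ -- which by homogeneity and density of big classes equals the infimum over $\Eff^1(X)^\circ$ defining $S_x$ -- yields $\sup\{t\ge0\mid\pi^*\alpha+te\in\Mov_1(Y)\}=S_x(\alpha)$.

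For the third item, the direction $\alpha\in\Mov_1(X)^\circ\Rightarrow S(\alpha)>0$ is the short one: fixing a K\"ahler class $A$, interior-ness gives $\e>0$ with $\alpha-\e A^{n-1}\in\Mov_1(X)$, so $\alpha\cdot L\ge\e\,(L\cdot A^{n-1})$ for every $L\in\Eff^1(X)$ by cone duality; combined with the uniform bound $n_x(L)\le c\,(L\cdot A^{n-1})$ of Proposition~\ref{prop nak upper} ($c$ independent of $x$), this gives $(\alpha\cdot L)/n_x(L)\ge\e/c$ for all big $L$ and all $x$, hence $S(\alpha)\ge\e/c>0$. For the converse, suppose $0\neq\alpha\in\partial\Mov_1(X)$; by $\Mov_1(X)^*=\Eff^1(X)$ there is a nonzero $L_0\in\Eff^1(X)$ with $\alpha\cdot L_0=0$, and I would split on the divisorial Zariski decomposition $L_0=P(L_0)+N(L_0)$. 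If $P(L_0)=0$, then $L_0=N(L_0)=\sum a_iD_i$ is a nonzero effective $\R$-divisor, and $\alpha\cdot D_i\ge0$ for each prime component forces $\alpha\cdot D_i=0$; for $x\in D_i$ the class $D_i+\delta A$ is big with $n_x(D_i+\delta A)\ge n_x(D_i)\ge\mult_x(D_i)\ge1$, so $S_x(\alpha)\le\alpha\cdot(D_i+\delta A)/n_x(D_i+\delta A)=\delta\,(\alpha\cdot A)\to0$ and $S(\alpha)=0$. If $P(L_0)\neq0$, then the numerical dimension satisfies $\nu(L_0)\ge1$, and the standard volume asymptotics give $\vol(L_0+\delta A)\gtrsim\delta^{\,n-\nu(L_0)}$ as $\delta\to0^+$; with Proposition~\ref{prop lowerbound} this yields $S_x(\alpha)\le\alpha\cdot(L_0+\delta A)/n_x(L_0+\delta A)\le\delta\,(\alpha\cdot A)/\vol(L_0+\delta A)^{1/n}\lesssim\delta^{\,\nu(L_0)/n}\to0$ uniformly in $x$, so again $S(\alpha)=0$. (Alternatively, when $\mathfrak{M}(\alpha)>0$ one may feed the structure theorem of \cite{lehmann2016positiivty} into Theorem~\ref{thrm local sesha curve} to locate a point with $S_x(\alpha)=0$, while $\mathfrak{M}(\alpha)=0$ already forces $S(\alpha)=0$ by item one.)

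The step I expect to be the main obstacle is this converse in the third item: one must beat the linear decay of $\alpha\cdot(L_0+\delta A)=\delta\,(\alpha\cdot A)$ against the decay of a good lower bound for $n_x(L_0+\delta A)$. The bound $n_x\ge\vol^{1/n}$ is too weak when $L_0$ is numerically rigid -- there $\vol(L_0+\delta A)$ decays like $\delta^n$ -- which is exactly why the case split on $P(L_0)$ is forced, the rigid case being rescued by the integral Lelong number $\mult_x$ of an effective component, and the remaining case relying on the Nakayama-type asymptotics $\vol(L_0+\delta A)\sim\delta^{\,n-\nu(L_0)}$.
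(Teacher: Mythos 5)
Your first two items and the forward implication of the third follow the paper's argument essentially verbatim (volume bound $n_x\geq\vol^{1/n}$ for the upper bound, cone duality $\Eff^1(Y)^*=\Mov_1(Y)$ applied to $\pi^*L+sE$ for the geometric characterization, and Proposition \ref{prop nak upper} against $\alpha-\e A^{n-1}\in\Mov_1(X)$ for positivity on the interior). Where you genuinely diverge is the converse of the third item. The paper argues directly: $S(\alpha)>0$ forces $\mathfrak{M}(\alpha)>0$, hence $\alpha=\langle L_\alpha^{n-1}\rangle$ by the structure theorem of \cite{lehmann2016positiivty}, and a divisorial component $D$ of $E_{nK}(L_\alpha)$ would give $\alpha\cdot D=0$ by \cite{BFJ09}, hence $S_x(\alpha)=0$ at $x\in D$; so $\codim E_{nK}(L_\alpha)\geq 2$ and $\alpha$ is interior. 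You instead argue the contrapositive from a nonzero $L_0\in\Eff^1(X)$ with $\alpha\cdot L_0=0$ and split on its divisorial Zariski decomposition; this is close in spirit to the paper's own remark following the proof, except that where the remark invokes \cite[Lemma 3.10]{lehmann2016positiivty} to rule out $\alpha\cdot P=0$ for $P\neq 0$ movable, you substitute the volume lower bound $\vol(L_0+\delta A)\gtrsim\delta^{n-\nu(L_0)}$. That substitution works, but as stated it leans on "standard volume asymptotics" for the numerical dimension, whose various formulations do not all agree in general; fortunately you only need the case $\nu\geq 1$, i.e. $\vol(L_0+\delta A)\geq n\delta^{n-1}\,P(L_0)\cdot A^{n-1}>0$, which follows from superadditivity of positive products and monotonicity of the volume, so you should cite that rather than the general asymptotics. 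Your approach buys independence from the structure theorem and from \cite{BFJ09} for this implication (at the price of the positive-product estimate), while the paper's route is shorter and sets up exactly the machinery reused in Theorem \ref{thrm local sesha curve}; note also that your parenthetical alternative via Theorem \ref{thrm local sesha curve} would be circular in the paper's ordering, since that theorem's proof relies on Theorem \ref{thrm sesha curve}.
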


\begin{proof} 

Recall that, given a movable class $\alpha \in \Mov_1 (X)$, its local Seshadri constant $S_x (\alpha)$ at $x$ is defined by
\begin{equation*}
  S_x (\alpha) = \inf_{L \in \Eff^1 (X)^\circ}\left(\frac{\alpha\cdot L}{n_x (L)}\right).
\end{equation*}
By Proposition \ref{prop lowerbound}, $n_x (L) \geq \vol(L)^{1/n}$, so
\begin{equation*}
  S_x (\alpha) \leq \inf_{L \in \Eff^1 (X)^\circ}\left(\frac{\alpha\cdot L}{\vol(L)^{1/n}}\right) = \mathfrak{M}(\alpha)^{n-1/n}.
\end{equation*}

Similar to the geometric characterization of the local Nakayama function $N_x (\cdot)$, using the duality $\Eff^1 (X)^* =\Mov_1 (X)$ we can show the geometric characterization of $S_x (\cdot)$.

Let $\pi: Y \rightarrow X$ be the blow-up of $X$ at $x$, and let $E = \pi^{-1}(x)$. Note that $\pi^* \alpha \in \Mov_1 (Y)$ whenever $\alpha\in \Mov_1 (X)$, since $\pi_* \Eff^1 (Y) \subset \Eff^1 (X)$.
Let $L' = \pi^*L + sE \in \Eff^1 (Y)$. Then $L \in \Eff^1 (X)$.
Note that $\pi^*\alpha + te \in \Mov_1 (X)$ if and only if
\begin{equation*}
  (\pi^*\alpha + te)\cdot (\pi^*L + sE) \geq 0
\end{equation*}
for all $L \in \Eff^1 (X)$ and all real numbers $s$ such that $\pi^*L + sE \in \Eff^1 (Y)$. This inequality is obvious for $s \geq 0$, so we only need to consider the case $s<0$. Then just as the proof of Theorem \ref{thrm nak curve},
we have that $\pi^* \alpha + te \in \Mov_1 (Y)$ if and only if
\begin{equation*}
  t \leq \frac{\alpha\cdot L}{n_x (L)}
\end{equation*}
for all $L$ pseudo-effective, implying that
\begin{equation*}
  S_x (\alpha) = \sup\{t\geq 0 | \pi^* \alpha +te \in \Mov_1 (Y)\}.
\end{equation*}

Finally, we prove our characterization of the interior of the movable cone -- that if $\alpha\in\Mov_1(X)$, then the global Seshadri constant $S(\alpha)>0$ if and only if $\alpha \in \Mov_1 (X)^\circ$.

Suppose first that $\alpha \in \Mov_1 (X)^\circ$. Then there is a K\"ahler class $A$ such that $\alpha - A^{n-1} \in \Mov_1 (X)$. In particular, for any $L \in \Eff^1(X)\setminus\{0\}$ we have
\begin{equation*}
  \frac{\alpha\cdot L}{A^{n-1}\cdot L} \geq 1.
\end{equation*}
By Proposition \ref{prop nak upper}, we have the upper bound for $n_x(L)$:
\begin{equation*}
  n_x (L) \leq c (L \cdot A^{n-1}).
\end{equation*}
Thus,
\begin{equation*}
  S(\alpha) \geq \frac{\alpha\cdot L}{c (L \cdot A^{n-1})}\geq c^{-1}>0.
\end{equation*}

For the other direction, suppose that $S(\alpha)>0$. Then by the inequality $S(\alpha) \leq \mathfrak{M}(\alpha)^{n-1/n}$, we have that $\mathfrak{M}(\alpha)>0$. By the structure theorem \cite[Theorem 1.10]{lehmann2016positiivty}, $\mathfrak{M}(\alpha)>0$ implies that
\begin{equation*}
  \alpha = \langle L_{\alpha}^{n-1}\rangle
\end{equation*}
for a unique big and movable $(1,1)$ class $L_{\alpha}$. It remains to prove that $\codim E_{nK}(L_{\alpha}) \geq 2$. If not, then $E_{nK}(L_{\alpha})$ contains some divisorial component $D$, and so by \cite{BFJ09} (and its extension to the transcendental situation \cite{nystrom2016duality}), $\alpha \cdot D =0$. Then for $x \in D$ we get
\begin{equation*}
  S(\alpha)\leq S_x (\alpha) \leq \frac{\alpha\cdot D}{\nu(D, x)} =0,
\end{equation*}
forcing $S(\alpha)=0$, a contradiction. This yields $\codim E_{nK}(L_{\alpha}) \geq 2$, and hence $\alpha\in \Mov_1 (X)^\circ$.
\end{proof}

\begin{rmk}
We remark that there is a slightly different way to see $\alpha \in \Mov_1 (X)^\circ$. We need to verify that $\alpha \cdot L >0$ for any non-zero class $L\in \Eff^1 (X)$. 
Since $\mathfrak{M}(\alpha)>0$, by \cite[Lemma 3.10]{lehmann2016positiivty}, for any non-zero $L \in \Mov^1 (X)$ we have $\alpha \cdot L>0$. We also have the Zariski decomposition \cite{Bou04}:
\begin{equation*}
L = P(L)+ N(L),
\end{equation*}
where $N(L)$ is given by an effective divisor.
Then $S(\alpha)>0$ implies $\alpha \cdot N(L)>0$ whenever $N(L)\neq 0$, as then there is a point $x$ so that $n_x(N(L)) > 0$. This finishes the verification. As \cite[Lemma 3.10]{lehmann2016positiivty} holds over a compact K\"ahler manifold which is not projective, this result holds in that generality as well.
\end{rmk}

\begin{rmk}\label{rmk semicont S_x}
Analogous to the semicontinuity of $s_x$ (see \cite{LazarPositivity-1}), Fulger \cite{fulger2017seshadri} shows that if $\alpha$ is a movable curve class, then $x\mapsto S_x(\alpha)$ is lower semicontinuous with respect to the countable Zariski topology.
\end{rmk}

\subsubsection{Proof of Theorem \ref{thrm local sesha curve}} We give a characterization of the vanishing locus of $S_x (\alpha)$.

\begin{thm}[Theorem \ref{thrm local sesha curve}]
Let $\alpha\in \Mov_1 (X)$ be a class on the boundary of $\Mov_1 (X)$ with $\mathfrak{M}(\alpha)>0$. Then we have:
\begin{equation*}
  S_x (\alpha)=0 \Leftrightarrow x \in\textrm{the divisorial components of}\  E_{nK}(L_{\alpha}).
\end{equation*}
\end{thm}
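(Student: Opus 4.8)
The plan is to prove both directions using the geometric characterization $S_x(\alpha) = \sup\{t \geq 0 \mid \pi^*\alpha + te \in \Mov_1(Y)\}$ from Theorem \ref{thrm sesha curve}, together with the structure theory relating $\alpha = \langle L_\alpha^{n-1}\rangle$ to the non-K\"ahler locus of $L_\alpha$. Throughout, write $L = L_\alpha$, a big and movable $(1,1)$-class with $\alpha = \langle L^{n-1}\rangle$, and recall from the structure theorem \cite{lehmann2016positiivty} that $\alpha \in \partial \Mov_1(X)$ forces $E_{nK}(L)$ to have at least one divisorial component.

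For the implication ``$x$ in a divisorial component of $E_{nK}(L) \Rightarrow S_x(\alpha) = 0$'', I would argue as in the last step of the proof of Theorem \ref{thrm sesha curve}: if $D$ is a divisorial component of $E_{nK}(L)$ through $x$, then by \cite{BFJ09} (and \cite{nystrom2016duality} transcendentally) the positive product satisfies $\langle L^{n-1}\rangle \cdot D = \alpha \cdot D = 0$. Since $\alpha$ is movable and $D$ is an irreducible divisor, the equivalent definition of $S_x$ from Remark \ref{rmk sesah def} — or directly pairing $\pi^*\alpha + te$ against the strict transform of $D$ — gives $S_x(\alpha) \leq \frac{\alpha \cdot D}{\nu(D,x)} = 0$. (One must be slightly careful that Remark \ref{rmk sesah def} is stated for curve classes; alternatively, pull back $D$ and observe $(\pi^*\alpha + te)\cdot(\pi^*D - \mult_x(D)E)$ must be $\geq 0$ for $\pi^*\alpha + te$ to be movable, and $\pi^*D - \mult_x(D)E$ is pseudo-effective, which forces $t \cdot e \cdot(-\mult_x(D)E) \leq \alpha\cdot D = 0$, hence $t = 0$.)

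The substantive direction is the converse: if $x \notin$ (the divisorial components of $E_{nK}(L)$), then $S_x(\alpha) > 0$. The natural approach is to split into two cases. If $x \in \Amp(L)$, I would use the mass concentration argument underlying Proposition \ref{prop lowerbound}: there is a K\"ahler current $T \in L$ with analytic singularities and $\Sing(T) = E_{nK}(L)$ missing $x$, and by solving Monge-Amp\`ere equations (as in \cite{Dem93}, \cite{BEGZ10MAbig}) or Fujita approximation one gets currents in $L$ with positive Lelong number at $x$ while controlling the movability of $\langle L^{n-1}\rangle$ after blow-up; concretely, one wants to show $\pi^*\alpha + te = \langle(\pi^*L - sE)^{n-1}\rangle$-type classes stay movable for small $t$, which should follow because $\pi^*L - sE$ is big and movable on $Y$ for small $s > 0$ when $x \in \Amp(L)$. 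The remaining case $x \in E_{nK}(L)$ but $x$ lies only in components of codimension $\geq 2$ requires showing that the non-divisorial part of $E_{nK}(L)$ does not obstruct movability after blow-up; here the key point is that blowing up a point lying in a codimension $\geq 2$ locus does not create a divisorial obstruction, so one can still perturb $\pi^*L$ by $-sE$ inside the big movable cone of $Y$ and push forward to conclude $\pi^*\alpha + te \in \Mov_1(Y)$.

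I expect the main obstacle to be the converse direction in the case $x \in E_{nK}(L) \setminus (\text{divisorial components})$: one needs a precise statement that movability of $\langle (\pi^*L - sE)^{n-1}\rangle$ on $Y$ is equivalent to $\pi^*L - sE$ being movable (or pseudo-effective with codimension $\geq 2$ non-K\"ahler locus) on $Y$, and that this holds for small $s$ exactly when $x$ avoids the divisorial components of $E_{nK}(L)$. This should reduce, via the structure theorem \cite[Corollary 3.15]{lehmann2016positiivty} applied on $Y$, to understanding $E_{nK}(\pi^*L - sE)$ — whose divisorial components are the strict transforms of the divisorial components of $E_{nK}(L)$ (the exceptional $E$ itself does not appear since $x \notin$ those components), so $\pi^*\alpha + te$ remains in $\Mov_1(Y)^\circ$-adjacent region, giving $S_x(\alpha) > 0$. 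Assembling this compatibility of divisorial Zariski/non-K\"ahler data under a point blow-up is the technical heart of the argument.
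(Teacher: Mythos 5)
The easy direction of your argument (a divisorial component $D \ni x$ of $E_{nK}(L_\alpha)$ forces $S_x(\alpha)=0$ via $\alpha\cdot D=0$ and pairing against $\pi^*D-\mult_x(D)E$) matches the paper and is fine. The converse is where the problem lies: you have identified it as ``the technical heart'' but the strategy you sketch for it does not close, for two concrete reasons. First, your candidate witness for movability of $\pi^*\alpha+te$ is essentially $\langle(\pi^*L_\alpha-sE)^{n-1}\rangle$, but $\alpha=\langle L_\alpha^{n-1}\rangle$ is a \emph{positive product}, not the ordinary power $L_\alpha^{n-1}$ (these differ when $L_\alpha$ is big and movable but not nef, which is exactly the situation on $\partial\Mov_1(X)$). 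The clean identity $(\pi^*L-sE)^{n-1}=\pi^*(L^{n-1})+s^{n-1}e$ holds only for ordinary intersection; for positive products one has, via the projection formula, $\pi_*\langle(\pi^*L_\alpha-sE)^{n-1}\rangle\leq\alpha$, which is an inequality in the \emph{wrong} direction for concluding $\langle(\pi^*L_\alpha-sE)^{n-1}\rangle\geq\pi^*\alpha+te$. No comparison of positive products before and after the blow-up is supplied, and the needed control of $E_{nK}(\pi^*L_\alpha-sE)$ when $x$ sits in a codimension $\geq 2$ component of $E_{nK}(L_\alpha)$ is asserted rather than proved.

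Second, and more fundamentally, your approach never addresses the uniformity that the statement $S_x(\alpha)>0$ actually requires. By the polar definition one must bound $\alpha\cdot L/n_x(L)$ below over \emph{all} of $\Eff^1(X)^\circ$. Writing $L=P(L)+N(L)$, the positive parts are handled by $\mathfrak{M}(\alpha)>0$, but the negative parts are sums of exceptional primes, and even if every individual exceptional prime $D$ through $x$ satisfies $\alpha\cdot D>0$ (because $x$ avoids the divisorial components of $E_{nK}(L_\alpha)$), there may be infinitely many such primes, and one needs $\inf_D \alpha\cdot D/\nu(D,x)>0$ uniformly. The paper's proof is built precisely around this point: it takes a minimizing sequence $L_k$ with $n_x(L_k)=1$, uses the divisorial Zariski decomposition and the bound $m_k\leq\rho(X)$ on the number of exceptional primes in $N(L_k)$ to extract a sequence of exceptional primes $D_k\ni x$ with $\alpha\cdot D_k/\nu(D_k,x)\to 0$, and then invokes Boucksom's accumulation lemma (\cite[Lemma 3.15]{Bou04}) to show the $D_k$ must eventually repeat (else their normalized limit would be a nonzero movable class $M$ with $\alpha\cdot M=0$, contradicting $\mathfrak{M}(\alpha)>0$), finally identifying the repeated prime as a divisorial component of $E_{nK}(L_\alpha)$ via \cite[Theorem D]{nystrom2016duality}. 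Your proposal contains no substitute for this compactness/non-accumulation step, so as written the converse direction is not proved.
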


\begin{proof}

Let $\alpha\in \Mov_1 (X)$ be a class on the boundary of $\Mov_1 (X)$ satisfying $\mathfrak{M}(\alpha) >0$. We aim to give a description of the vanishing locus of $S_x (\alpha)$. By \cite[Theorem 1.10 and Corollary 3.15]{lehmann2016positiivty}, we have a unique big and movable $(1,1)$ class $L_{\alpha}$ satisfying $\alpha = \langle L_{\alpha}^{n-1}\rangle$ and $\codim E_{nK} (L_{\alpha}) =1$.

One direction is immediate -- by the argument in the proof of Theorem \ref{thrm sesha curve}, it is clear that if $x$ is in the divisorial components of $E_{nK}(L_{\alpha})$, then $S_x (\alpha)=0$.

For the other direction, let us go back to the definition of $S_x (\alpha)$:
\begin{equation*}
  S_x (\alpha) = \inf_{L \in \Eff^1 (X)^\circ} \frac{\alpha\cdot L}{n_x (L)} = \inf_{L \in \Eff^1 (X)^\circ, n_x (L) =1} \alpha\cdot L.
\end{equation*}
Take a sequence of $L_k$ such that $\alpha\cdot L_k$ tends to $S_x (\alpha) =0$ and $n_x (L_k)=1$. Consider the Zariski decomposition of the sequence:
\begin{equation*}
 L_k = P(L_k) +N(L_k).
\end{equation*}
After taking a subsequence, we have
\begin{equation}\label{eq lim pos neg}
  \alpha\cdot P(L_k)\rightarrow 0\ \textrm{and}\ \alpha\cdot N(L_k)\rightarrow 0.
\end{equation}

By \cite[Theorem 3.10]{lehmann2016positiivty}, since $\mathfrak{M}(\alpha)>0$, we know that $\alpha$ is an interior point of the dual of $\Mov^1 (X)$. Thus $\alpha\cdot P(L_k)\rightarrow 0$ implies that the sequence $P(L_k)$ is compact, and without loss of generality we can assume that $\lim_k P(L_k) =P$. Then
\begin{equation*}
0\leq \alpha \cdot P \leq \lim_k \alpha \cdot L_k = S_x(\alpha) = 0 ,
\end{equation*}
and so $\alpha \cdot P =0$. Since $\mathfrak{M}(\alpha)>0$, by \cite[Theorem 3.10]{lehmann2016positiivty} again, we get $P=0$.
By upper semicontinuity of $n_x (\cdot)$, this implies $\lim_{k\rightarrow \infty} n_x (P(L_k)) =0$. Since
\begin{equation*}
  n_x (L_k) = n_x (P(L_k)) + n_x (N(L_k)),
\end{equation*}
this yields
\begin{equation}\label{eq lim nak}
\lim_{k\rightarrow \infty} n_x (N(L_k)) =1.
\end{equation}
By the rigidity of $N(L_k)$,
\begin{equation*}
N(L_k) = \sum_{i=1} ^ {m_k} a_k ^i [D_k ^i],
\end{equation*}
where each $a_k ^i >0$ and each $D_k ^i$ is an exceptional prime divisor. Note that
\begin{equation*}
  n_x (N(L_k)) = \sum_{i=1} ^ {m_k } a_k ^i \nu([D_k ^i], x),
\end{equation*}
so by (\ref{eq lim nak}), for every $k$ large enough, the above sum has a partial sum, denoted by $\sum_{i=1} ^ {m_k '} a_k ^i \nu([D_k ^i], x)$, so that every $D_k^i$ in this partial sum contains $x$.

By \cite{Bou04}, we know that the number of exceptional primes in every $N(L_k)$ is at most $\rho(X)$, the Picard number of $X$. Thus, $m_k '\leq \rho(X)$ for every $k$. By (\ref{eq lim nak}), there is some $\delta>0$ small such that for all $k$ large enough, there is a term $a_k ^i \nu([D_k ^i], x)$ satisfying
\begin{equation}\label{eq lelong bound}
\frac{1-\delta}{\rho(X)} \leq  a_k ^i \nu([D_k ^i], x) \leq 1+\delta.
\end{equation}

By \eqref{eq lim pos neg} and \eqref{eq lelong bound}, we see that there is a sequence of exceptional prime divisors $D_k$ containing $x$ such that
\begin{equation*}
  \frac{\alpha\cdot D_k}{\nu(D_k, x)} \rightarrow 0.
\end{equation*}

By the definition of Lelong numbers, there is a K\"ahler class $A$ such that $\nu(D_k, x)\leq A^{n-1}\cdot D_k$ holds for all $k$. Consider the sequence $\frac{\{D_k\}}{A^{n-1}\cdot D_k}$ -- it is compact, so after taking a subsequence, we can assume that it is convergent. Moreover,
\begin{equation*}
  \frac{\alpha\cdot D_k}{A^{n-1}\cdot D_k} \rightarrow 0.
\end{equation*}

We claim that the sequence $D_k$ contains a term repeating infinitely many times; hence $D_k / A^{n-1}\cdot D_k$ must contain its limit point. Otherwise, by \cite[Lemma 3.15]{Bou04}, the rays $\mathbb{R}_+ [D_k]$ can accumulate only on $\Mov^1 (X)$, so the limit of the sequence $D_k / A^{n-1}\cdot D_k$ is a non-zero movable class $M$, as each term in the sequence has mass one with respect to $A^{n-1}$. However, since $\alpha \cdot M=0$ and $\mathfrak{M}(\alpha)>0$, this is impossible by \cite{lehmann2016positiivty}. This finishes the proof of the claim. Thus $x$ is contained in some exceptional prime divisor $D$ satisfying $\alpha\cdot D=0$. Then by \cite[Theorem D]{nystrom2016duality}, $D$ must be a divisorial component of $E_{nK}(L_\alpha)$.

In summary, we get
\begin{equation*}
  \mathcal{V}(\alpha) =\ \textrm{the union of divisorial components of}\  E_{nK}(L_\alpha).
\end{equation*}
This finishes the proof of Theorem \ref{thrm local sesha curve}.

\end{proof}

\begin{rmk}
Let $L\in \Eff^1 (X)^\circ$ be a big class and let $\alpha = \langle L^{n-1}\rangle$. By the same argument above, we also have
\begin{equation*}
  \mathcal{V}(\alpha) =\ \textrm{the union of divisorial components of}\  E_{nK}(L).
\end{equation*}
\end{rmk}


\subsection{Further discussions}\label{sec further discuss}

\subsubsection{Local positivity along subvarieties}
We have discussed the local positivity of $(n-1, n-1)$-classes at a point by studying the polar transform of local positivity of $(1,1)$-classes at a point. Note that the local Seshadri and Nakayama constants for $(1,1)$-classes have various generalizations (see e.g. \cite{LazarPositivity-1}, \cite{bauerSeveralauthors-Seshadri}). By the polar transform then, at least at a formal level, these lead to corresponding generalizations for $(n-1, n-1)$-classes.

First, we can study the local positivity along a subvariety:
\begin{itemize}
  \item If $\alpha\in \Eff_1 (X)$, we can define its Nakayama constant along a subvariety $V$ as follows. If $L\in \Nef^1 (X)$, the Seshadri constant of $L$ along $V$ is the real number
      \begin{equation*}
        s_V (L)= \sup\{t\geq 0 | \pi^*L- t E\in \Nef^1 (Y)\},
      \end{equation*}
      where $\pi: Y\rightarrow X$ is the blow-up of $X$ along $V$ and $E$ is the exceptional divisor. The polar transform yields the Nakayama constant of $\alpha$ along $V$:
      \begin{equation*}
        N_V (\alpha)=\inf_{L \in \Nef^1 (X)^\circ} \frac{\alpha\cdot L}{s_V (L)}.
      \end{equation*}
  \item If $\alpha\in \Mov_1 (X)$, we can define its Seshadri constant along a subvariety $V$ as follows. If $L\in \Eff^1 (X)$, the Nakayama constant of $L$ along $V$ is the real number
      \begin{equation*}
        n_V (L)= \sup\{t\geq 0 | \pi^*L- t E\in \Eff^1 (Y)\},
      \end{equation*}
      where $\pi: Y\rightarrow X$ is the blow-up of $X$ along $V$ and $E$ is the exceptional divisor. The polar transform yields the Seshadri constant of $\alpha$ along $V$:
      \begin{equation*}
       S_V (\alpha)=\inf_{L \in \Eff^1 (X)^\circ} \frac{\alpha\cdot L}{n_V (L)}.
      \end{equation*}
\end{itemize}

In the same way, one can also define a higher dimensional Nakayama constant for $(n-1, n-1)$-classes:
\begin{itemize}
  \item Let $L \in \Nef^1 (X)$. Its $d$-dimensional Seshadri constant at $x$ is the real number
      \begin{equation*}
        s^d _x (L)=\inf_{V} \left(\frac{L^d \cdot V}{\nu(V, x)}\right)^{1/d},
      \end{equation*}
      where the infimum is taken over all irreducible subvarieties $V$ of dimension $d$ such that $x\in V$.
      It is not hard to see that the function $s^d _x (\cdot) \in \HConc_1 (\Nef^1 (X))$, so we can define its polar transform. If $\alpha \in \Eff_1 (X)$, its $d$-dimensional Nakayama constant at $x$ is the real number
      \begin{equation*}
      N^d _x (\alpha)=\inf_{L \in \Nef^1 (X)^\circ} \frac{\alpha\cdot L}{s^d _x (L)}.
      \end{equation*}
\end{itemize}

It would be interesting to study the geometry of these invariants. In particular, for the Seshadri function $S_V : \Mov_1 (X) \rightarrow \mathbb{R}$, the generalization of Proposition \ref{prop lowerbound} (i.e., the estimate for $n_V$), would be helpful. One can expect that the mass concentration method developed in \cite{DP04} will give some information on $n_V$.

\begin{rmk}
Let $X$ be a smooth projective surface, and let $V = \{x_1, ..., x_r\}$ be a set of finite points with $r \geq 2$. Then unlike the case when $V$ is a single point (Remark \ref{rmk surface dual}), in general $N_V$ (respectively, $S_V$) does not coincide with $n_V$ (respectively, $s_V$).
\end{rmk}

In \cite{demaillySeshadri}, using singular metrics with isolated singularities, Demailly also introduced another local positivity invariant for a nef line bundle $L$:
\begin{equation*}
  \gamma_x (L)=\sup\{\nu(T, x)|T \in L\  \textrm{is a positive current with isolated singularity at}\ x\}.
\end{equation*}
He also shows that $s_x (L) \geq \gamma_x (L)$, and that if $L$ is ample, then $s_x (L) = \gamma_x (L)$ for every $x\in X$; further, if $L$ is big and nef, then $s_x (L) = \gamma_x (L)$ for any $x$ outside some divisor. Analogous to $\gamma_x (\cdot)$, we can define a similar invariant for a class $\alpha\in \Mov_1 (X)$:
\begin{equation*}
  \Gamma_x (\alpha)=\sup\{\nu(T, x)|T \in \alpha\  \textrm{is a positive current with isolated singularity at}\ x\}.
\end{equation*}
We are not sure if $S_x (\alpha)$ and $\Gamma_x (\alpha)$ have a similar relation as that between $s_x(\cdot)$ and $\gamma_x (\cdot)$.

\subsubsection{Universal generic bounds}
Let $X$ be a smooth projective variety, and let $\alpha$ be a movable curve class. By Remark \ref{rmk nak def}, it is clear that
\begin{equation}\label{eq same sesh}
  S_x (\alpha) = \inf_{x\in D} \frac{\alpha \cdot D}{\mult_x (D)},
\end{equation}
where the infinimum is taken over all irreducible divisors $D$ passing through $x$.

In \cite[Theorem 1]{lazarsfeld_localpositivity} (see also \cite[Section 5.2.C]{LazarPositivity-1}), it is proved that if $L$ is a big and nef line bundle on a projective manifold $X$ of dimension $n$, then
$s_x (L) \geq 1/n$ for all $x$ outside a countable union of subvarieties. Moreover, it is conjectured in \cite[Conjecture 5.2.4]{LazarPositivity-1} that $s_x (L) \geq 1$ for all $x$ outside a countable union of subvarieties. Inspired by this, it is natural to ask:

\begin{quest}
Let $C$ be an irreducible movable curve such that its class $\alpha = \{[C]\}$ is big. Is there a positive constant $c(n)$ depending only on $\dim X =n$ such that
\begin{equation*}
  S_x (\alpha) \geq c(n)
\end{equation*}
for all $x$ outside a countable union of subvarieties?
\end{quest}

By Remark \ref{rmk semicont S_x}, it is clear that if $c$ is the maximum of the function $x\mapsto S_x(\alpha)$, then it assumes $c$ for very general points, that is, outside a countable union of subvarieties.

Analogous to \cite{lazarsfeld_localpositivity}, for the curve class $\alpha=\{[C]\}$, if it satisfies the following condition:
\begin{quote}\label{curve condition}
For any $x$ very general and any irreducible divisor $D$ passing through $x$, there is an effective curve $C_x \in k\alpha$ passing through $x$ such that $C_x \not \subset D$ and
\begin{equation*}
\mult_x (C_x) \geq c(n)k\  \textrm{for some }\ c(n)>0,
\end{equation*}
\end{quote}
then one could get the universal generic lower bound $S_x (\alpha)\geq c(n)$.  Otherwise, suppose the inequality fails -- then for any general point $x$ there is a divisor $D_x$ through $x$ such that $\frac{\alpha \cdot D_x}{\mult_x (D_x)} < c(n)$. However, by the assumption on $\alpha$, there is a curve $C_x \in k\alpha$ such that
\begin{equation*}
k\alpha \cdot D_x \geq \mult_x (C_x) \mult_x (D_x) \geq c(n)k \mult_x (D_x).
\end{equation*}
This is a contradiction.
Note that in the proof for big nef line bundles, the authors used an induction argument to study  subvarieties swept out by ``Seshadri-exceptional curves''. In the curve case, in a similar way, it is also possible to define ``Seshadri-exceptional divisors". But there seem to exist certain difficulties in directly applying the same argument to our setting.

\begin{rmk}
In \cite{fulger2017seshadri}, Fulger noticed the following estimate: let $L \in \Nef^1 (X)$ and let $\alpha = L^{n-1}$. Then $S_x (\alpha)\geq s_x (L)^{n-1}$. This is true by a direct intersection number calculation:
\begin{equation*}
  (L- s_x (L) E)^{n-1} = \alpha + s_x (L)^{n-1} e.
\end{equation*}
Hence,
if $\alpha = L^{n-1}$ for some big nef line bundle $L$, then by \cite{lazarsfeld_localpositivity} one has the following generic lower bound
\begin{equation*}
S_x (\alpha) \geq  \frac{1}{n^{n-1}}.
\end{equation*}
\end{rmk}

\begin{rmk}
When $X$ is a smooth projective surface and $\alpha$ is given by an ample line bundle, it is known that $S_x (\alpha) = s_x (\alpha) \geq 1$ for all $x$ outside perhaps countably many points (see \cite[Proposition 5.2.3]{LazarPositivity-1}).
\end{rmk}

\bibliography{reference}
\bibliographystyle{amsalpha}

\bigskip

\bigskip

\noindent
\textsc{Department of Mathematics, Northwestern University,
Evanston, IL 60208, USA}\\
\noindent
\verb"Email: njm2@math.northwestern.edu"\\
\verb"Email: jianxiao@math.northwestern.edu"
\end{document}